\let\oldbibliography\thebibliography
\renewcommand{\thebibliography}[1]{%
\oldbibliography{#1}%
\setlength{\itemsep}{0pt}%
}
\newtheorem{theorem}{Theorem}[section]
\newtheorem{lemma}[theorem]{Lemma} 
\newtheorem{definition}[theorem]{Definition}
\newtheorem{example}[theorem]{Example}
\newtheorem{corollary}[theorem]{Corollary}
\newtheorem{remark}[theorem]{Remark}
\newcommand{\R}{\mathbb R}
\newcommand{\bt}{\begin{theorem}}
\newcommand{\et}{\end{theorem}}
\newcommand{\bl}{\begin{lemma}}
\newcommand{\el}{\end{lemma}}
\newcommand{\bd}{\begin{definition}}
\newcommand{\ed}{\end{definition}}
\newcommand{\bc}{\begin{corollary}}
\newcommand{\ec}{\end{corollary}}
\newcommand{\bp}{\begin{proof}}
\newcommand{\ep}{\end{proof}}
\newcommand{\bx}{\begin{example}}
\newcommand{\ex}{\end{example}}
\newcommand{\bi}{\begin{exercise}}
\newcommand{\ei}{\end{exercise}}
\newcommand{\bo}{\begin{prop}}
\newcommand{\eo}{\end{prop}}
\newcommand{\br}{\begin{remark}}
\newcommand{\er}{\end{remark}}
\newcommand{\be}{\begin{equation}}
\newcommand{\ee}{\end{equation}}
\newcommand{\ba}{\begin{align}}
\newcommand{\ea}{\end{align}}
\newcommand{\bn}{\begin{enumerate}}
\newcommand{\en}{\end{enumerate}}
\newcommand{\bg}{\begin{align*}}
\newcommand{\bcs}{\begin{cases}}
\newcommand{\ecs}{\end{cases}}
\newcommand{\bean}{\begin{eqnarray*}}
\newcommand{\eean}{\end{eqnarray*}}
\begin{document}
\title{ {\bf Large  singular  solutions  for  conformal  $Q$-curvature  equations  on  $\mathbb{S}^n$} }
\date{September   25,     2020}   
\author{\\  Xusheng Du,   \;\;        Hui Yang } 
\maketitle

\begin{center}
\begin{minipage}{130mm}
\begin{center}{\bf Abstract}\end{center}
In this paper, we study the existence of positive functions $K \in C^1(\mathbb{S}^n)$ such that the conformal $Q$-curvature equation  
\begin{equation}\label{001}
P_m (v) =K v^{\frac{n+2m}{n-2m}}~~~~~~ \textmd{on} ~ \mathbb{S}^n
\end{equation}
has a singular positive solution $v$  whose singular set is  a single point,  where $m$ is an integer satisfying $1 \leq m  < n/2$  and  $P_m$ is the intertwining operator of order $2m$. More specifically, we show that when $n\geq 2m+4$, every positive function in  
$C^1(\mathbb{S}^n)$ can be approximated in the $C^1(\mathbb{S}^n)$ norm by a positive function $K\in C^1(\mathbb{S}^n)$ such that \eqref{001} has a singular positive solution whose singular set is a single point. 
Moreover, such a solution can be constructed to be arbitrarily large near its singularity.  This is in contrast to the well-known results of Lin  \cite{Lin1998} and Wei-Xu \cite{Wei1999} which show that Eq. \eqref{001}, with $K$ identically a positive constant on $\mathbb{S}^n$, $n > 2m$, does not exist a singular positive solution whose singular set is a  single point.

\vskip0.10in

\noindent{\it Keywords:} Conformal $Q$-curvature equations;  Isolated singularity;  Large singular solutions

\vskip0.10in

\noindent {\it Mathematics Subject Classification (2010): } 35J30 ;   53C21 

\end{minipage}

\end{center}

\vskip0.20in

\section{Introduction}
Let  $\mathbb{S}^n$ be the $n$-dimensional unit sphere endowed with the induced metric $g_{\mathbb{S}^n}$ from $\mathbb{R}^{n+1}$.    The  aim of  this  paper is to  study  the  existence of positive functions $K\in C^1(\mathbb{S}^n)$ such that the conformal $Q$-curvature equation 
\begin{equation}\label{S001}
P_m (v) =K v^{\frac{n+2m}{n-2m}}~~~~~~ \textmd{on} ~ \mathbb{S}^n
\end{equation}
 has a singular positive solution $v$  whose singular set consists of a single point,    where $m$ is an integer,  $1 \leq m  < n/2$,   and $P_m$ is  an intertwining operator (see, e.g., Branson \cite{B95}) of order $2m$ given by 
\begin{equation}\label{Inter1}
P_m=\frac{\Gamma(B + \frac{1}{2} + m)}{\Gamma(B + \frac{1}{2} - m)},~~~~ B =\sqrt{-\Delta_{g_{\mathbb{S}^n}} + \left( \frac{n-1}{2} \right)^2}
\end{equation}
with $\Gamma$ being  the  Gamma function and  $\Delta_{g_{\mathbb{S}^n}}$ being  the Laplace-Beltrami operator on $(\mathbb{S}^n,  g_{\mathbb{S}^n})$.  The operator $P_m$ can be seen more concretely on $\mathbb{R}^n$ via the stereographic projection. Let $N$ be the north pole of $\mathbb{S}^n$  and $F$ be the inverse of the  stereographic projection:
$$
F:  \mathbb{R}^n \rightarrow  \mathbb{S}^n \backslash \{N\}, ~~~~  x \mapsto \left( \frac{2x}{1 + |x|^2},  \frac{|x|^2 - 1}{ |x|^2 +1} \right).  
$$ 
Then it follows from the conformal invariance of $P_m$ that 
\begin{equation}\label{Inter2}
(P_m(\phi))  \circ  F = |J_F|^{-\frac{n+2m}{2n}} (-\Delta)^m ( |J_F|^{\frac{n-2m}{2n}} (\phi \circ F))~~~~ \textmd{for}  ~ \phi \in C^\infty(\mathbb{S}^n), 
\end{equation}
where $\Delta$  is the Laplacian  operator on  $\mathbb{R}^n$   and $|J_F|$ is the determinant of the Jacobian of $F$, i.e.,  
$$
|J_F| = \left( \frac{2}{1 + |x|^2} \right)^n.   
$$

When $m=1$, $P_1= -\Delta_{g_{\mathbb{S}^n}} + \frac{n-2}{4(n-1)} R_0 $ is the well-know {\it conformal  Laplacian} associated with the metric $g_{\mathbb{S}^n}$, where $R_0=n(n-1)$ is the scalar curvature of $(\mathbb{S}^n,  g_{\mathbb{S}^n})$.  In this case, the equation \eqref{S001} reads as 
\begin{equation}\label{Sca01}
 -\Delta_{g_{\mathbb{S}^n}} v + \frac{n(n-2)}{4} v =K v^{\frac{n+2}{n-2}}~~~~~~ \textmd{on} ~ \mathbb{S}^n, ~~~ n \geq 3, 
\end{equation}
which is usually called the conformal scalar curvature equation.   Equation \eqref{Sca01} naturally arises in the study of the classical Nirenberg  problem that asks:   Which function $K$ on $(\mathbb{S}^n, g_{\mathbb{S}^n})$ is the scalar curvature of a metric $g$ that is conformal to $g_{\mathbb{S}^n}$?  There have been many papers on this problem,  see,  for example,  \cite{BC,CY,Chang1993,Chen1997A,CX12,Li01,Li02,SZ96,WY}  and the references therein.  Furthermore, the classical works of Schoen and Yau \cite{S,SY} on conformally flat manifolds have indicated the importance of studying  singular positive  solutions of \eqref{Sca01}.    When $K$ is identically a positive constant on $\mathbb{S}^n$,   Caffarelli, Gidas and Spruck \cite{Caffarelli1989} proved  that Eq. \eqref{Sca01} does not have a singular positive solution whose singular set consists of a single point, while Schoen \cite{S} constructed a singular positive solution of \eqref{Sca01} whose  singular set is any prescribed finite collection of at least {\it two}  points.  Mazzeo and Pacard in \cite{MP} provided  another construction method for Schoen's result.   When $K$ is a non-constant positive function,  Taliaferro \cite{Taliaferro2005} studied the existence of $K\in C^1(\mathbb{S}^n)$ such that \eqref{Sca01} has a singular positive solution $v$ whose singular set is  a single point  in  dimension $n \geq 6$,   and the solution $v$ can be constructed to be arbitrarily large near its singularity. This shows  that  there  does not exist an a priori estimate on the blow up rate of such a solution  $v$  near its singularity.    On the other hand,   Chen and Lin in  a series of  papers \cite{Chen1997E,Chen1999,Lin2000}  studied, among other things,  that under what assumptions on $K$  a  singular positive solution $v$ of \eqref{Sca01} near its singularity $\xi_0$ satisfies the following a priori estimate
$$
v(\xi) =O(|\xi - \xi_0|^{-\frac{n-2}{2}}). 
$$
We may also  see  \cite{TaliaferroZhang2006,Z02} for the similar a priori estimates of  singular positive solutions of \eqref{Sca01}.

When $m=2$, $P_2$ is the fourth order conformally invariant {\it Paneitz operator} associated with the metric $g_{\mathbb{S}^n}$.   For a smooth compact $n$-dimensional Riemannian manifold $(M, g)$ with $n\geq 4$,  the Branson's $Q$-curvature (see Branson \cite{B85}) is given by 
$$
Q_g=-\frac{1}{2(n-1)} \Delta_g R_g + \frac{n^3 - 4n^2 +16n -16}{8(n-1)^2(n-2)^2}  R_g^2 -\frac{2}{(n-2)^2} |Ric_g|^2, 
$$
where $R_g$ and $Ric_g$ denote the scalar curvature and the Ricci tensor of $g$ respectively.  The  fourth order  Paneitz operator (see  \cite{B85,P}) with respect to the metric $g$ is  defined by 
$$
P_2^g = \Delta_g^2 - \textmd{div}_g(a_n R_g g + b_n Ric_g) d + \frac{n-4}{2} Q_g 
$$
with  $a_n=\frac{(n-2)^2 +4}{2(n-1)(n-2)}$ and  $b_n=-\frac{4}{n-2}$.  When $n \geq 5$, the operator $P_2^g$ is conformally invariant:    if $\tilde{g}=v^{\frac{4}{n-4}} g$ is a conformal metric to $g$, then for all $\phi\in C^\infty(M)$ we have
\begin{equation}\label{PQ01}
P_2^g(v \phi)  = v^{\frac{n+4}{n-4}} P_2^{\tilde{g}}(\phi). 
\end{equation}
As a consequence one has the following conformal transformation law
\begin{equation}\label{PQ02}
P_2^g(v)  = \frac{n-4}{2} Q_{\tilde{g}} v^{\frac{n+4}{n-4}}, ~~~~   \tilde{g}=v^{\frac{4}{n-4}} g. 
\end{equation} 
Therefore,   as in the Nirenberg problem,  the problem of  prescribing $Q$-curvature on $(\mathbb{S}^n, g_{\mathbb{S}^n})$ for  $n \geq 5$  can be reduced to the study of existence of positive solutions to Eq. \eqref{S001} with $m=2$.    The  problem has been studied in \cite{DH,DMA01,DMA02,Fe,R} and many others.  More generally,  higher order conformally invariant differential operators  on Riemannian manifolds  and their associated $Q$-curvatures  have also been studied in  \cite{GJMS,GZ,J} and so on.     In particular,  the operator  $P_m$ (or more precisely, $P_m^{g_{\mathbb{S}^n}}$) on $\mathbb{S}^n$ involved in Eq. \eqref{S001} is  the most typical example,  and prescribing its associated higher order $Q$-curvature  on $\mathbb{S}^n$  is  naturally  reduced to the study of  Eq. \eqref{S001}  with   $m\geq 3$.    One may  see the work of Jin-Li-Xiong \cite{JLX} for a  unified  approach for  all  $m \in (0, n/2)$.

 When $K$ is identically a positive constant on $\mathbb{S}^n$,   the fact that  {\it no}  solution of Eq. \eqref{S001} exists which is singular  at  one  point  has   been shown by Lin \cite{Lin1998} for  $m=2$ and by Wei-Xu \cite{Wei1999} for $m\geq 3$.  In this paper,  we study the existence of positive functions  $K\in C^1(\mathbb{S}^n)$ such that  Eq. \eqref{S001} has  a singular positive solution $v$  whose singular set consists of a {\it single}  point when $m\geq 2$.   Clearly  this  will be in contrast to the  results of Lin  \cite{Lin1998} and Wei-Xu \cite{Wei1999}.   Moreover, we also investigate whether such a solution  $v$  can always be constructed to be arbitrarily large near its singularity. This means that,  for any given large continuous function $\varphi: (0, 1) \to (0, \infty)$  whether  such a solution  $v$  can  be constructed to  satisfy  
$$
v(\xi) \neq  O(\varphi(|\xi - \xi_0|)) ~~~~ \textmd{as} ~ \xi \to \xi_0,
$$
where $\{\xi_0\}$ is the singular set of $v$ on $\mathbb{S}^n$.  

The main result of this paper is the following theorem.

\begin{theorem}\label{T-001}
Let $\varepsilon$ be a positive number  and let   $\varphi: (0, 1) \to (0, \infty)$ be a  continuous function.   Suppose that  $k \in C^1(\mathbb{S}^n)$  is a  positive function,  $m\geq 1$ is an integer  and  $n \geq 2m +4$.  Then there exists    $\xi_0 \in \mathbb{S}^n$,  and  a  positive function $K \in C^1(\mathbb{S}^n)$ satisfying 
\begin{equation}\label{AApp}
\|K - k\|_{C^1(\mathbb{S}^n)} < \varepsilon~~~~~ \textmd{and} ~~~~~K(\xi)=k(\xi) ~~ \textmd{for} ~~ |\xi - \xi_0| \geq \varepsilon
\end{equation} 
such that  Eq. \eqref{S001} has  a  positive solution $v\in C^{2m} (\mathbb{S}^n \backslash  \{\xi_0\})$   satisfying 
\begin{equation}\label{Ar-La}
v(\xi) \neq  O(\varphi(|\xi - \xi_0|)) ~~~~ \textmd{as} ~ \xi \to \xi_0. 
\end{equation}     
\end{theorem}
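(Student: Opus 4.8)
The plan is to work on $\mathbb{R}^n$ via the stereographic projection, where, by \eqref{Inter2}, the problem becomes the construction of a positive solution $u$ of $(-\Delta)^m u = \tilde K\, u^{(n+2m)/(n-2m)}$ on $\mathbb{R}^n$ with a prescribed isolated singularity at the origin, $\tilde K$ being a small $C^1$ perturbation of the pullback of $k$ and supported near $0$. I would first fix a radial ``base'' singular solution: the Emden–Fowler type solution $u_0(x) = c_{n,m}|x|^{-(n-2m)/2}$ of $(-\Delta)^m u_0 = u_0^{(n+2m)/(n-2m)}$, or rather a truncated version of it. Since the theorem allows $v$ to be as large as we like near $\xi_0$ but does \emph{not} require the equation to hold at $\xi_0$ itself (only $v \in C^{2m}(\mathbb{S}^n\setminus\{\xi_0\})$), there is a lot of freedom: we only need a positive $C^{2m}$ function on a punctured neighbourhood of $0$ that solves the equation and grows faster than $\varphi$ along some sequence $x_j \to 0$.

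The key steps, in order, are: \emph{(1)} Build a sequence of ``bumps'' $w_j$, each a rescaled standard bubble $w_j(x) = \lambda_j^{(n-2m)/2}\, U\big(\lambda_j(x - x_j)\big)$ centered at points $x_j \to 0$ with $|x_j|$ decreasing fast and $\lambda_j \to \infty$ fast, where $U$ is the Talenti-type extremal solving $(-\Delta)^m U = U^{(n+2m)/(n-2m)}$ on $\mathbb{R}^n$; choosing the concentration rates so the bumps have disjoint supports (after truncation) and so that $w_j(x_j) = c\,\lambda_j^{(n-2m)/2} \to \infty$ faster than $\varphi(|x_j|)$, which secures \eqref{Ar-La}. \emph{(2)} Superpose: set $u = \sum_j \chi_j w_j$ plus a correction, where $\chi_j$ are cutoffs; on the (disjoint) supports the equation is satisfied exactly by $w_j$ with $\tilde K \equiv 1$, and on the transition annuli one must absorb the error $(-\Delta)^m u - u^{(n+2m)/(n-2m)}$ into a perturbation of $\tilde K$. \emph{(3)} Verify that the resulting $\tilde K$ stays positive and within $\varepsilon$ of the pullback of $k$ in $C^1$, and equals the pullback of $k$ outside an $\varepsilon$-ball; the condition $n \ge 2m+4$ is what makes the decay $|x|^{-(n-2m)/2}$ of each bump — and the derivative of that decay — fast enough that the accumulated $C^1$ error from infinitely many well-separated bumps is summable and small. \emph{(4)} Transfer back to $\mathbb{S}^n$: the point $\xi_0 = F(0)$ is the singularity, smoothness of $v$ away from $\xi_0$ follows from \eqref{Inter2} and elliptic regularity for $(-\Delta)^m$, and \eqref{Ar-La} translates directly since $F$ is a local diffeomorphism near $0$.

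The main obstacle I anticipate is \emph{Step (2)–(3)}: controlling the error term so that the correction to $\tilde K$ is genuinely small in $C^1$ (not merely $C^0$). Unlike the second-order case $m=1$, where one can invoke the maximum principle to patch solutions, for $m \ge 2$ the polyharmonic operator has no maximum principle, so the gluing must be done by a fixed-point/perturbation argument in a weighted space, and the off-diagonal interactions $(-\Delta)^m(\chi_j w_j)$ on the overlap regions must be estimated carefully. The trick is to make the bumps \emph{so} widely separated (choosing $|x_j|$ and $\lambda_j$ recursively, each depending on all previous choices) that the interaction terms are negligible; the decay exponent $(n-2m)/2 \ge 2$, guaranteed by $n \ge 2m+4$, gives enough room to do this while keeping $\tilde K$ within $\varepsilon$ of $k$ and, after choosing the first bump's location close enough to $0$, supported in $\{|\xi - \xi_0| < \varepsilon\}$. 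One also has to check that where bumps are truncated, $\tilde K$ can be taken positive — this uses that each $w_j$ is itself a positive solution and the correction is a lower-order perturbation on a thin annulus.
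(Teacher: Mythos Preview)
Your broad strategy---transfer to $\mathbb{R}^n$ by stereographic projection, superpose concentrating bubbles at points $x_j\to 0$ chosen so that $u(x_j)$ beats $\varphi(|x_j|)$, and define $K$ as the ratio $(-\Delta)^m u\,/\,u^{(n+2m)/(n-2m)}$---matches the paper. But the execution you sketch has a genuine gap. You propose $u=\sum_j \chi_j w_j+(\text{correction})$ with cutoffs $\chi_j$ and plan to absorb the transition-annulus error into $\tilde K$. On an annulus at distance $r_j$ from $x_j$ (with $\lambda_j r_j\gg 1$), the term $(-\Delta)^m(\chi_j w_j)$ picks up contributions of size $r_j^{-2m}w_j$, while $(\chi_j w_j)^{(n+2m)/(n-2m)}\sim w_j^{(n+2m)/(n-2m)}$; the ratio is $\sim(\lambda_j r_j)^{2m}\gg 1$, so the resulting $\tilde K$ is \emph{huge} (and its gradient worse) on every annulus, not an $\varepsilon$-perturbation of $k$. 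This is not an interaction issue that wider separation cures; it is intrinsic to cutting off a single bubble. Your vague ``correction'' would have to cancel these large errors exactly, and the fixed-point scheme you mention does not address this.

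The paper avoids cutoffs entirely: it takes $\tilde u=\sum_i u_i$ with the bubbles \emph{untruncated}, so that $(-\Delta)^m\tilde u=\sum_i u_i^{(n+2m)/(n-2m)}$ holds exactly on $\mathbb{R}^n\setminus\{0\}$ (the tails are summable once $\lambda_i\to 0$ fast enough). The bounded correction $u_0$ is then obtained not by a weighted fixed point but by a sub/super-solution iteration (Lemma~\ref{subsupermethod}) for $(-\Delta)^m u_0=H(x,u_0)$, where $H$ is a monotone truncation of the nonlinearity; the polyharmonic maximum-principle issue you flag is bypassed because the iteration preserves $0\le(-\Delta)^s u_0\le(-\Delta)^s\bar v$ for all $s=1,\dots,m-1$. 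With $u=u_0+\tilde u$ and $K$ defined as the ratio, one gets a pointwise sandwich $\kappa\le K\le k$ for an auxiliary $\kappa\in C^1$ close to $k$, and the real work---where $n\ge 2m+4$ enters---is a direct $C^1$ estimate of $K$ on the set where the truncation in $H$ is active (Step~5), not summability of bubble tails. One further point your outline misses: the paper chooses $\xi_0$ to be a \emph{critical point} of $k$, so that after projection $\nabla k(0)=0$; this is what allows replacing $k$ by a function identically $1$ near the origin at $C^1$ cost $<\varepsilon/2$, which the whole construction relies on.
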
 

\begin{remark}\label{Rem-01}
The solution   $v$  constructed  in Theorem \ref{T-001} is a  distributional  solution of \eqref{S001}  on the whole sphere $\mathbb{S}^n$.  That is,  $v \in L^{\frac{n+2m}{n-2m}}(\mathbb{S}^n)$  and $v$ satisfies 
\begin{equation}\label{DIS-01}
\int_{\mathbb{S}^n}  v P_m (\phi) = \int_{\mathbb{S}^n} K v^{\frac{n+2m}{n-2m}} \phi~~~~~ \textmd{for} ~ \textmd{all} ~ \phi \in C^\infty(\mathbb{S}^n). 
\end{equation}
At first glance,  the above statement  seems impossible since the solution  $v$  in Theorem \ref{T-001} satisfies \eqref{Ar-La} where no  bound is imposed on the size of $\varphi$ near $0$.  However this is not the case.  Indeed, if $v$ is a $C^{2m}$ positive solution of Eq. \eqref{S001} in some punctured neighborhood $\Omega \backslash  \{\xi_0\}$ of some point $\xi_0\in \mathbb{S}^n$,   then $v\in L_{\textmd{loc}}^{\frac{n+2m}{n-2m}} (\Omega)$ and $u$ is a distributional  solution of \eqref{S001}  in $\Omega$. See, e.g., \cite{Jin2019,Y20}.   
\end{remark}

To prove Theorem \ref{T-001}, take  $\xi_0 \in \mathbb{S}^n$ such that   $\nabla k(\xi_0)=0$ and let $\pi$ be the stereographic projection  of $\mathbb{S}^n$ onto $\mathbb{R}^n \cup \{\infty\}$ with  $\xi_0$ being  the  south pole.   Then  $v$ is a positive solution of \eqref{S001} with singular set $\{\xi_0\}$ if  and only if 
$$
u(x):= \left( \frac{2}{1 + |x|^2} \right)^{\frac{n-2m}{2}}  v (\pi^{-1} (x)),~~~~~ x \in \mathbb{R}^n \backslash \{0\}
$$
is a positive solution of
$$
\begin{cases}
(-\Delta)^m u = K(x)  u^\frac{n+2m}{n-2m} ~~~~~~ & \textmd{in} ~ \R^n \backslash \{0\}, \\
u (x) \neq  O(1)   ~~~~~~ & \textmd{as} ~ |x| \to 0^+, \\
u (x) = O(|x|^{2m -n})   ~~~~~~ & \textmd{as} ~ |x| \to \infty. 
\end{cases} 
$$
Therefore, to show Theorem \ref{T-001}, it suffices to establish  the following result on the equation 
\begin{equation}\label{Rn-Kx}
(-\Delta)^m u = K(x)  u^\frac{n+2m}{n-2m} ~~~~~~  \textmd{in} ~ \R^n \backslash \{0\}. 
\end{equation}

\begin{theorem}\label{thmC1K} 
Suppose that  $m\geq 1$ is an integer and $n \geq  2m +4$.  Let $k:\R^n \to \R$ be a $C^1$ function which is bounded between two positive constants and satisfies $\nabla k(0) = 0$. Let $\varepsilon$ be a positive number and $\varphi:(0,1) \to (0,\infty)$ be a continuous function. Then there exists a $C^1$ positive function $K:\R^n \to \R$ satisfying $ \nabla K(0) = 0$,  $K(0) = k(0)$,  $K(x) = k(x)$ for $|x| \geq \varepsilon$ and 
\begin{equation}\label{KkC1}
\|K - k\|_{C^1(\R^n)} < \varepsilon
\end{equation}
such that Eq. \eqref{Rn-Kx}  has  a $C^{2m}$ positive solution $u$    satisfying
\begin{equation}\label{uneqphi}
u(x) \neq O (\varphi(|x|)) ~~~~~~ \textmd{as} ~ |x| \to 0^+, 
\end{equation}
\begin{equation}\label{ubehainf}
u(x) = O (|x|^{2m-n}) ~~~~~~ \textmd{as} ~ |x| \to \infty, 
\end{equation}
and 
\begin{equation}\label{Sign-01}
(-\Delta)^s u > 0 ~~~~~~ \textmd{in} ~ \R^n \backslash  \{0\},  ~~~  s = 1, 2, \dots, m-1. 
\end{equation} 
\end{theorem}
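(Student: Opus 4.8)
The plan is to produce the solution $u$ by an explicit construction and then simply \emph{set} $K:=(-\Delta)^m u\,/\,u^{p}$, where $p=(n+2m)/(n-2m)$; with this choice Eq.~\eqref{Rn-Kx} holds on $\R^n\setminus\{0\}$ wherever $u>0$, and each conclusion of the theorem becomes a requirement on $u$: positivity of $(-\Delta)^m u$ (so that $K>0$), that $u$ solve the \emph{unperturbed} equation $(-\Delta)^m u=k u^{p}$ exactly on $\{|x|\ge\varepsilon\}$ (so that $K=k$ there), that $(-\Delta)^m u\,/\,u^{p}$ extend across $0$ to a $C^1$ function with value $k(0)$ and vanishing gradient, together with the prescribed blow-up \eqref{uneqphi}, decay \eqref{ubehainf}, and signs \eqref{Sign-01}. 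Since enlarging the set where $K=k$ and shrinking the tolerance only make the statement harder, I may replace $\varepsilon$ by any smaller number; in particular I may assume $\varepsilon$ is so small that $k$ is as close on $B_\varepsilon(0)$ to the constant $k(0)$ as I please (using $\nabla k(0)=0$).

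The driving mechanism is a superposition of cut-off standard bubbles concentrating at $0$, in the spirit of Taliaferro's construction \cite{Taliaferro2005} for $m=1$. The bubbles $U_{\lambda,\xi}(x)=c_{n,m}\bigl(\lambda/(1+\lambda^2|x-\xi|^2)\bigr)^{(n-2m)/2}$, after a harmless rescaling of height, solve $(-\Delta)^m U=a\,U^{p}$ for any prescribed constant $a>0$, and — crucially for \eqref{Sign-01} — satisfy $(-\Delta)^s U=c\,|x|^{2(m-s)-n}*U^{p}>0$ for $1\le s\le m-1$. Fix first a positive \emph{background} $w$ on $\R^n\setminus\{0\}$ that is regular at $0$ and at infinity, decays like $|x|^{2m-n}$ (giving \eqref{ubehainf}), has $(-\Delta)^s w>0$ for $1\le s\le m-1$, and solves $(-\Delta)^m w=k\, w^{p}$ on $\{|x|\ge\varepsilon\}$; such $w$ is produced by a sub/supersolution scheme (a small multiple of a bubble is a supersolution, while positive $(-\Delta)^m$-harmonic functions — positive by Boggio's principle on balls and by positivity of the kernel $|x|^{2m-n}$ elsewhere — furnish subsolutions, and the monotone iteration is order-preserving precisely because of these positivity facts), after, if necessary, a small adjustment of $k$ inside $B_\varepsilon(0)$, within the allowed tolerance, that renders the background equation solvable; the intermediate positivities come once more from the Riesz-potential representation. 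Now pick a rapidly decreasing sequence $x_j\to0$ in $B_\varepsilon(0)\setminus\{0\}$, radii $\rho_j\downarrow0$ with the balls $B_{\rho_j}(x_j)$ pairwise disjoint and avoiding $0$, cut-offs $\eta_j\in C_c^\infty(B_{\rho_j}(x_j))$ with $\eta_j\equiv1$ on $B_{\rho_j/2}(x_j)$, and set
\[
u \;=\; w \;+\; \sum_j \eta_j\,(U_j-c_j),
\]
where $U_j$ is a bubble centred at $x_j$, normalized so that $(-\Delta)^m U_j=k(x_j)\,U_j^{p}$, with concentration $\lambda_j\to\infty$ to be chosen enormous, and $c_j=U_j|_{\partial B_{\rho_j}(x_j)}$.

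On $B_{\rho_j/2}(x_j)$ one has $u=w+U_j-c_j$, hence $(-\Delta)^m u=(-\Delta)^m w+k(x_j)U_j^{p}>0$ and, since $U_j$ dominates there and $x$ is near $x_j$, this equals $(k(x)+o(1))\,u^{p}$. On the transition annuli $B_{\rho_j}(x_j)\setminus B_{\rho_j/2}(x_j)$ the bubble is already deep in its tail, of size $O(\lambda_j^{-(n-2m)/2}\rho_j^{2m-n})$, so the commutator terms $(-\Delta)^s(\eta_j(U_j-c_j))-\eta_j(-\Delta)^s(U_j-c_j)$ are $O(\rho_j^{-2s}\lambda_j^{-(n-2m)/2}\rho_j^{2m-n})$; taking each $\lambda_j$ large enough makes them (i) dominated by $(-\Delta)^s w$, which preserves \eqref{Sign-01} and $K>0$, and (ii) $o(1)$ in $C^1$ as $j\to\infty$, so $K$ extends $C^1$-continuously across $0$ with $K(0)=k(0)$, $\nabla K(0)=0$; the same smallness makes all the series converge, so $u\in C^{2m}(\R^n\setminus\{0\})$ (in fact smoother) and $u\in L^{p}$. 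Finally, the central height is $u(x_j)\gtrsim\lambda_j^{(n-2m)/2}$ and $\lambda_j$ is subject to no upper bound, so one chooses $\lambda_j$ so large that $\lambda_j^{(n-2m)/2}\ge j\,\varphi(|x_j|)$, which forces \eqref{uneqphi}.

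The principal difficulty is the quantitative bound $\|K-k\|_{C^1(\R^n)}<\varepsilon$. The contributions from the bubble zone are controllable, being $o(1)$-small and supported where there is freedom in the choice of $\lambda_j,\rho_j$; the delicate point is the region near $\{|x|=\varepsilon\}$, where the background must pass from the genuine $k$-equation to whatever it does inside $B_\varepsilon$ while $k$ is merely $C^1$, so that neither a crude cut-off nor a freezing of $k$ to the constant $k(0)$ is admissible — one must instead locate and scale this transition, and adjust the interior data of the background, so finely that the two matched solutions are $C^{2m}$-close there and the error thereby introduced in $K$ stays below $\varepsilon$. The second, structural, obstacle is that $(-\Delta)^m$ obeys no maximum principle for $m\ge2$: positivity of $u$, of $(-\Delta)^m u$, and of the intermediate powers \eqref{Sign-01} is not visible directly, and the whole construction must be arranged so that each of these quantities appears manifestly as a positive Riesz potential of a nonnegative density — which is precisely the role of the sign conditions \eqref{Sign-01}, and, I expect, the reason for the hypothesis $n\ge2m+4$ (equivalently $n-2m\ge4$), which is what makes the tail decay and the error sums summable.
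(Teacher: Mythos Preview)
Your high-level plan --- build $u$ first and then set $K=(-\Delta)^m u/u^{p}$ --- is exactly what the paper does, and your identification of the two structural difficulties (the $C^1$-closeness of $K$ to $k$, and the absence of a maximum principle for $m\ge2$) is accurate. The gap is in the ansatz $u=w+\sum_j\eta_j(U_j-c_j)$ with \emph{cut-off} bubbles: it cannot deliver $\|K-k\|_{C^0}<\varepsilon$, let alone $C^1$. The obstruction is nonlinear interference. There is always a shell, at distance of order $\lambda_j^{-1/2}$ from $x_j$ in your normalization, on which $U_j$ and the background $w$ are comparable, say both $\approx c>0$. If that shell lies in $\{\eta_j\equiv1\}$ (i.e.\ $\rho_j\gg\lambda_j^{-1/2}$), then on it $(-\Delta)^m u\approx k(0)(U_j^{p}+w^{p})$ while $u^{p}\approx(U_j+w)^{p}$, so
\[
K\ \approx\ k(0)\,\frac{U_j^{p}+w^{p}}{(U_j+w)^{p}}\ \approx\ k(0)\cdot 2^{1-p}\ \le\ \tfrac12\,k(0),
\]
a fixed distance from $k$, independent of all parameters. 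If instead you take $\rho_j\ll\lambda_j^{-1/2}$ so that $U_j\gg w$ throughout $B_{\rho_j}(x_j)$, then across the cut-off annulus $u$ drops from order $\lambda_j^{(n-2m)/2}$ to order $1$ on a scale $\rho_j$, and the commutators in $(-\Delta)^m(\eta_j(U_j-c_j))$ are of order $(\lambda_j\rho_j)^{-2m}u^{p}$, forcing $K$ to be large somewhere. No choice of $\lambda_j,\rho_j$ avoids both regimes; your claim that ``the contributions from the bubble zone are controllable, being $o(1)$-small'' is where the argument breaks.

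The paper's remedy is precisely not to cut the bubbles. It writes $u=u_0+\sum_i u_i$ with the $u_i$ \emph{entire} bubbles and constructs a bounded correction $u_0$ (via the sub/super-solution scheme you mention, which is where the intermediate positivities enter) solving $(-\Delta)^m u_0=H(x,u_0)$ for a carefully truncated nonlinearity $H$. Off a small exceptional set $S$ one has $H(x,u_0)=\kappa(x)\bigl(u_0+\sum_i u_i\bigr)^{p}-\sum_i u_i^{p}$ and hence $K=\kappa$ \emph{exactly}; here $\kappa$ equals $k$ except on tiny balls $B_{\rho_j}(x_j)$ where it is lowered to a constant $k_j<1$ with $1-k_j$ so small that $\|\kappa-k\|_{C^1}<\varepsilon/2$. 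Thus the interference defect is absorbed into the \emph{designed} gap $k-\kappa$ rather than appearing as an uncontrolled error. On $S$ one only has $\kappa\le K\le k$, and the hard work --- and the true role of $n\ge 2m+4$ --- is the gradient estimate for $K$ there (see the estimates \eqref{estu0Mj}--\eqref{esttildeuj2}), which requires a delicate coupling of $\rho_j$, $\lambda_j$, and $1-k_j$ (Lemma~\ref{sim}), not merely summability of bubble tails.
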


\begin{remark} When $m \geq 2$ and $n = 2m + 1$, $2m + 2$ or $2m + 3$, the existence of such $K$ is still unknown.   When $m=1$,  Theorem \ref{thmC1K}  is not true in dimension  $n=3$ (see Chen-Lin \cite{Chen1997E})  or $n=4$  (see Taliaferro-Zhang \cite{TaliaferroZhang2006}),  but  its validity  in  dimension  $n=5$ is  unknown.  
\end{remark}

For when $m=1$, Theorem \ref{thmC1K} was proved  by Taliaferro \cite{Taliaferro2005}.    In the case of $m=1$,  we also mention that Taliaferro \cite {Taliaferro1999} proved the existence of positive functions $K \in C(\mathbb{R}^n\backslash \{0\})$ with $0 < a < K(x) < b$ in $\mathbb{R}^n\backslash \{0\}$ 
such that  Eq. \eqref{Rn-Kx} has a  $C^2$ positive solution $u$ satisfying \eqref{uneqphi} in  dimension  $n \geq 3$,  where $a$ and $b$ are given positive constants satisfying $b/a  > 2^{4/(n-2)}$,  and Taliaferro-Zhang \cite{TaliaferroZhang2003} proved  the existence of positive continuous functions $K \in C(\mathbb{R}^n)$ such that  Eq. \eqref{Rn-Kx} has a positive  solution satisfying \eqref{uneqphi}.   In \cite{Leung2003},   for  $m=1$,  Leung proved the existence of a positive  Lipschitz  continuous function $K$ on $\mathbb{R}^n$,  $n \geq 5$,  such that Eq. \eqref{Rn-Kx}  has a $C^2$ positive solution $u$ {\it not}  satisfying $u(x) = O(|x|^{-(n-2)/2})$  near the origin.

On the other direction,   Jin and Xiong in \cite{Jin2019}  recently proved that every positive solution $u$ of the equation 
\begin{equation}\label{JX} 
\begin{cases}
(-\Delta)^m u =   u^\frac{n+2m}{n-2m},    & n > 2m  \\
(-\Delta)^s u \geq  0,     & s = 1, 2, \dots, m-1 
\end{cases} 
~~~~~~ \textmd{in} ~ B_1 \backslash \{0\} 
\end{equation} 
satisfies the a priori bound 
\begin{equation}\label{JX-001} 
u(x) = O(|x|^{-\frac{n-2m}{2}}) ~~~~~~ \textmd{as} ~ |x| \to 0^+. 
\end{equation}
Moreover, they also showed that every positive solution $u$ of \eqref{JX} is   asymptotically  radially  symmetric near the origin.     Clearly,  the conclusion of Theorem \ref{thmC1K}  that the solution $u$ can be constructed  to  satisfy \eqref{uneqphi} and \eqref{Sign-01}   is also in contrast to the result of Jin and Xiong.

We will prove Theorem \ref{thmC1K} in the next section.  Our  proof  adapts that of Taliaferro \cite{Taliaferro2005} when $m=1$.  The basic  idea is as follows. Without loss of generality we may assume $k(0)=1$.  Let  
$$ 
w_\lambda(x) =c_{n,m} \left( \frac{\lambda}{\lambda^2 + |x|^2} \right)^\frac{n-2m}{2}
$$
be a smooth positive solution (which is usually called a {\it bubble})  of the equation  $(-\Delta)^m u =u^\frac{n+2m}{n-2m}$ on  $\R^n$   for a positive constant  $c_{n,m} >0$ and for each $\lambda >0$.   Notice  that  as $\lambda \to 0^+$,  $w_\lambda(x)$ and each of its partial derivatives with respect to $x$ converge uniformly to zero  on every closed subset of $\mathbb{R}^n \backslash \{0\}$ and $w_\lambda(0)$ tends to $+\infty$. Define $u_i(x)=w_{\lambda_i}(x - x_i)$, where $\{x_i\}_{i=1}^{\infty}$ is a sequence of distinct points in $B_\delta \backslash \{0\}$ for some small $\delta \in (0, \varepsilon)$ which tends to the origin  and $\{\lambda_i\}_{i=1}^{\infty}$  is a sequence of positive numbers which tends sufficiently fast to $0$.    Then the function $\tilde{u}:=\sum_{i=1}^\infty u_i  \in C^\infty(\mathbb{R}^n \backslash \{0\})$ would satisfy $\tilde{u}(x) \neq O (\varphi(|x|))$  as  $|x| \to 0^+$  and  {\it approximately} satisfy   
$$
(-\Delta)^m \tilde{u} = k(x) \tilde{u}^\frac{n+2m}{n-2m} ~~~~~ \textmd{in} ~   B_\delta \backslash \{0\}.
$$
We then  construct  an  appropriate  positive bounded function $u_0 \in C^{2m} (\mathbb{R}^n \backslash \{0\})  $ such that 
\begin{equation}\label{De-KK03}
u:= u_0 + \tilde{u}~~~~~ \textmd{and} ~~~~~ K:= \frac{(-\Delta)^m u}{u^{(n+2m) / (n-2m)}} 
\end{equation} 
satisfy the conclusion of Theorem \ref{thmC1K}.  To this end,   the sequences $\{x_i\}$ and $\{\lambda_i\}$ need to be selected very carefully.    We  will check  at the end of the proof that  $K$ defined in \eqref{De-KK03}  is $C^1$ on the whole space $\mathbb{R}^n$,  where it  becomes  clear why  we  need $n \geq 2m +4$.  For the higher order equation \eqref{Rn-Kx},  we  need  to  establish several more delicate estimates on the gradient  of $K$.   Indeed,   if one completely follows the estimates for  the gradient  of $K$   in   Taliaferro \cite{Taliaferro2005}, then the stronger condition $n \geq 6m$ will  be required.      
               
\vskip0.10in  

\noindent{\bf Acknowledgments.}   Both authors  would like to thank Prof. Tianling Jin for many helpful discussions and encouragement.

\section{Proof of Theorem \ref{thmC1K}}

We will use $B_r(x)$ to denote the open ball of radius $r$ in $\R^n$ with center $x$ and write $B_r(0)$ as $B_r$ for short. We write $a_i \sim b_i$ if the sequence $\{a_i / b_i\}_{i=1}^\infty$ is bounded between two positive constants depending only on $n$, $m$, $\inf_{\R^n} k$ and $\sup_{\R^n} k$. 
To prove Theorem \ref{thmC1K},  we also need the following simple lemma.  
\begin{lemma}{(\cite{Taliaferro2005})} \label{Taliaferro} 
Suppose $\lambda > 1$, $\{a_i\}_{i=1}^N \subset (0,\infty)$, and $a_1 \geq a_i$ for $2 \leq i \leq N$. Then
$$
\frac{\sum_{i=1}^N a_i^\lambda}{\left( \sum_{i=1}^N a_i \right)^\lambda} \leq \frac{1 +\frac{a_2}{a_1}}{ 1 + \lambda \frac{a_2}{a_1} } < 1.
$$
\end{lemma}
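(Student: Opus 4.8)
The plan is to exploit the scale invariance of the left-hand side and reduce everything to Bernoulli's inequality; no estimate finer than $a_i^\lambda \le a_i$ will be needed. First I would note that the ratio $\sum_{i=1}^N a_i^\lambda / (\sum_{i=1}^N a_i)^\lambda$ is homogeneous of degree $0$ in $(a_1,\dots,a_N)$ and that $a_2/a_1$ is scale invariant, so after dividing every $a_i$ by $a_1$ we may assume $a_1 = 1$; the hypothesis $a_1 \ge a_i$ then reads $0 < a_i \le 1$ for $2 \le i \le N$ (implicitly $N \ge 2$, since $a_2$ occurs in the statement). Setting $S := \sum_{i=1}^N a_i$, we then have $S \ge 1 + a_2 > 1$.

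The key (and essentially only) estimate is that, because $\lambda > 1$ and $0 < a_i \le 1$, one has $a_i^\lambda = a_i\, a_i^{\lambda-1} \le a_i$ for each $i \ge 2$, hence
$$
\sum_{i=1}^N a_i^\lambda = 1 + \sum_{i=2}^N a_i^\lambda \le 1 + \sum_{i=2}^N a_i = S .
$$
Consequently $\dfrac{\sum_{i=1}^N a_i^\lambda}{S^\lambda} \le \dfrac{S}{S^\lambda} = S^{1-\lambda}$. Since $1-\lambda < 0$, the map $t \mapsto t^{1-\lambda}$ is decreasing on $(0,\infty)$, so from $S \ge 1 + a_2$ we obtain $S^{1-\lambda} \le (1+a_2)^{1-\lambda}$.

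Finally I would invoke Bernoulli's inequality: for $\lambda \ge 1$ and $a_2 > 0$, $(1+a_2)^\lambda \ge 1 + \lambda a_2$. Dividing by $1 + a_2 > 0$ and taking reciprocals of the two positive quantities gives $(1+a_2)^{1-\lambda} = \dfrac{1+a_2}{(1+a_2)^\lambda} \le \dfrac{1+a_2}{1+\lambda a_2}$, which chains with the previous displays to yield the asserted bound. The strict inequality $\dfrac{1+a_2}{1+\lambda a_2} < 1$ is immediate, since $\lambda > 1$ forces $1 + \lambda a_2 > 1 + a_2$.

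There is no genuine obstacle here; the statement is elementary. The only points that require a moment's care are: recognizing that one should normalize $a_1 = 1$ rather than track the $a_i$ individually; using the crude bound $a_i^\lambda \le a_i$ rather than anything sharper (a term-by-term refinement, or the equivalent direct route of proving $(1+t^\lambda)(1+\lambda t) \le (1+t)^{\lambda+1}$ by calculus, makes the argument substantially messier for no gain); and correctly reversing the inequality when raising $S$ to the negative power $1-\lambda$ and again when taking reciprocals in the Bernoulli step.
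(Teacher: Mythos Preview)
Your argument is correct. The paper itself does not supply a proof of this lemma: it is quoted verbatim from \cite{Taliaferro2005} and used as a black box, so there is no in-paper proof to compare against. Your normalization $a_1=1$, the crude bound $a_i^\lambda\le a_i$ to get $\sum a_i^\lambda \le S$, and the final Bernoulli step are all sound and give exactly the claimed inequality.
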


{\noindent}{\bf  Proof of Theorem \ref{thmC1K}.} The proof consists of six steps.

\vskip0.10in

{\it Step 0. Preliminaries.}  Without loss of generality, we can assume that $0 < \varepsilon < 1$ and $k(0) = 1$. Since $\nabla k(0) = 0$, there exists a $C^1$ positive function $\tilde{k} : \R^n \to \R$ such that $\tilde{k} \equiv 1$ in a small neighborhood of the origin, $\tilde{k}(x) = k(x)$ for $|x| \geq \varepsilon$ and $\|\tilde{k} - k\|_{C^1(\R^n)} < \varepsilon/2$. Replacing $k$ by $\tilde{k}$, we can assume that $k \equiv 1$ in $B_\delta$ for some $0 < \delta < \varepsilon$.

Let
$$
\psi(r,\lambda) = c_{n,m} \left( \frac{\lambda}{\lambda^2 + r^2} \right)^\frac{n-2m}{2}
$$
with  $c_{n,m} = [(n+2m-2)!!/(n-2m-2)!!]^{(n-2m)/(4m)}$. It is easy to check that for every $\lambda > 0$, the function $w_\lambda(x) := \psi(|x|,\lambda)$ satisfies
$$
\begin{cases}
(-\Delta)^m w_\lambda = w_\lambda^\frac{n+2m}{n-2m}\\
(-\Delta)^s w_\lambda > 0, ~ s = 1,2,\dots,m-1
\end{cases} ~~~~~~ \textmd{in} ~ \R^n.
$$
After some calculations, one can find that there exist  $\delta_1$ and $\delta_2$ satisfying
\begin{equation}\label{defdelta}
0 < \delta_2 < \frac{\delta_1}{2} < \frac{\delta}{4}
\end{equation}
and for any $|x| \leq \delta_2$ or $|x| \geq \delta$,
\begin{equation}\label{12xx1xx22}
\frac{1}{2} < \frac{w_\lambda(x-x_1)}{w_\lambda(x-x_2)} < 2 ~~~~~~ \textmd{when} ~ |x_1| = |x_2| = \delta_1 ~ \textmd{and} ~ 0 < \lambda \leq \delta_2.
\end{equation}

Recall that  $k$ is bounded between two positive constants, we denote 
\begin{equation}\label{defab}
a = \frac{1}{2} \inf\limits_{\R^n} k ~~~~~ \textmd{and} ~~~~~ b = \sup\limits_{\R^n} k.
\end{equation}

Let $i_0$ be the smallest integer greater than $2$ such that
\begin{equation}\label{defi0}
i_0^\frac{4m}{n-2m} > \frac{ 2^\frac{3n+2m}{n-2m} }{ (2a)^\frac{n+2m}{4m} }.
\end{equation}

Choose a sequence $\{x_i\}_{i=1}^\infty$ of distinct points in $\R^n$ and a sequence $\{r_i\}_{i=1}^\infty$ of positive numbers such that
\begin{equation}\label{defxi}
|x_1| = |x_2| = \cdots = |x_{i_0}| = \delta_1, ~~~~~~ \lim\limits_{i \to \infty} |x_i| = 0,
\end{equation}
\begin{equation}\label{defri}
r_1 = r_2 = \cdots = r_{i_0} = \frac{\delta_2}{2}, ~~~~~~ \lim\limits_{i \to \infty} r_i = 0,
\end{equation}
\begin{equation}\label{defBi1}
B_{4r_i}(x_i) \subset B_{\delta_2} \setminus \{0\} ~~~~~~ \textmd{for} ~ i > i_0
\end{equation}
and
\begin{equation}\label{defBi2}
\overline{B_{2r_i}(x_i)} \cap \overline{B_{2r_j}(x_j)} = \emptyset ~~~~~~ \textmd{for} ~ j > i >i_0.
\end{equation}
In addition, we require that the union of the line segments $\overline{x_1x_2}$, $\overline{x_2x_3}$, $\dots$, $\overline{x_{i_0-1}x_{i_0}}$, $\overline{x_{i_0}x_1}$ be a regular $i_0$-gon. We will prescribe the side length of this polygon later. From \eqref{defdelta}, \eqref{defxi} and \eqref{defri} we know that
$$
\overline{B_{2r_i}(x_i)} \subset B_{2\delta_1} \setminus \bar{B}_{\delta_2} ~~~~~~ \textmd{for} ~ 1 \leq i \leq i_0
$$
and hence by \eqref{defBi1},
\begin{equation}\label{defBi3}
\overline{B _{2r_i}(x_i)} \cap \overline{B_{2r_j}(x_j)} = \emptyset  ~~~~~~ \textmd{for} ~ 1 \leq i \leq i_0 < j.
\end{equation}

Define  three functions  $f : [0,\infty) \times (0,\infty) \times (0,\infty) \to \R$ and $M, Z : (0,1) \times (0,\infty) \to (0,\infty)$ by
$$
f(z_1,z_2,z_3) = z_2(z_1 + z_3)^\frac{n+2m}{n-2m} - z_1^\frac{n+2m}{n-2m},
$$
\begin{equation}\label{defMZ}
M(z_2,z_3) = \frac{ z_2z_3^\frac{n+2m}{n-2m} }{ \left( 1 - z_2^\frac{n-2m}{4m} \right)^\frac{4m}{n-2m} } ~~~~~~ \textmd{and} ~~~~~~ Z(z_2,z_3) = \frac{ z_3 z_2^\frac{n-2m}{4m} }{ 1 - z_2^\frac{n-2m}{4m} }.
\end{equation}
For  each  fixed $(z_2,z_3) \in (0,1) \times (0,\infty)$, the function $f(\cdot,z_2,z_3) : [0,\infty) \to \R$ is strictly increasing on $[0,Z(z_2,z_3)]$ and is strictly decreasing on $[Z(z_2,z_3),\infty)$,  and attains  its maximum value $M(z_2,z_3)$ at $z_1 = Z(z_2,z_3)$.

Define $F : [0,\infty) \times (0,\infty) \times (0,\infty) \to (0,\infty)$ by
$$
F(z_1,z_2,z_3) = 
\begin{cases}
f(z_1,z_2,z_3) & \textmd{if} ~ 0 < z_2 < 1 ~ \textmd{and} ~ 0 \leq z_1 \leq Z(z_2,z_3), \\
M(z_2,z_3) & \textmd{if} ~ 0 < z_2 < 1 ~ \textmd{and} ~ z_1 > Z(z_2,z_3), \\
f(z_1,z_2,z_3) ~~~~~~ & \textmd{if} ~ z_2 \geq 1.
\end{cases}
$$
It is easy to see that $f$ and $F$ are $C^1$, $f \leq F$ and $F$ is non-decreasing in $z_1$, $z_2$ and $z_3$.

\vskip0.10in

{\it Step 1.  Selecting  the sequences $\{x_i\}$ and $\{\lambda_i\}$.}   Let 
$$
w(x) = (2b)^{-\frac{n}{2m}} \psi(|x|,1) = \frac{c_{n,m}}{(2b)^\frac{n}{2m}} \left( \frac{1}{1 + |x|^2} \right)^\frac{n-2m}{2} ~~~~~~ \textmd{for} ~ x \in \R^n.
$$
Then we have
\begin{equation}\label{Lmw}
(-\Delta)^m w = (2b)^\frac{2n}{n-2m} w^\frac{n+2m}{n-2m} ~~~~~~ \textmd{in} ~ \R^n. 
\end{equation}

Choose a sequence $\{\varepsilon_i\}_{i=1}^\infty$ of positive numbers such that
\begin{equation}\label{defepsiloni}
\varepsilon_1 = \varepsilon_2 = \cdots = \varepsilon_{i_0} ~~~~~~ \textmd{and} ~~~~~~ \varepsilon_i \leq 2^{-i} ~~~~~~ \textmd{for}~i \geq 1.
\end{equation}

Now we introduce four sequences of real numbers  as follows.   For $i \geq 1$, let
\begin{equation}\label{defki}
k_i \in \left( \frac{1}{2},1 \right) ~~~~~~ \textmd{with} ~~~~~~ k_1 = k_2 = \cdots = k_{i_0},
\end{equation}
\begin{equation}\label{defMi}
M_i = \frac{ M(k_i,2w(0)) }{ (2w(0))^\frac{n+2m}{n-2m} } = \frac{k_i}{ \left( 1 - k_i^\frac{n-2m}{4m} \right)^\frac{4m}{n-2m} },
\end{equation}
\begin{equation}\label{defrhoi}
\rho_i = \sup \left\{ \rho > 0 ~ : ~ I_{2m} (\chi_{B_{2\rho_i}(x_i)}) \leq \frac{w}{2^{i+1}( 2w(0) )^\frac{n+2m}{n-2m} M_i } \right\}
\end{equation}
and
\begin{equation}\label{deflambdai}
\lambda_i = \sup \{ \lambda > 0 ~ : ~ w_\lambda(x-x_i) \leq \varepsilon_i a^\frac{n-2m}{4m} w(x) ~~ \textmd{for} ~ |x-x_i| \geq \rho_i \}, 
\end{equation}
where $I_{2m}$ is the Riesz potential operator of order $2m$   and $\chi_{B_{2\rho_i}(x_i)}$ is the characteristic function of the ball $ B_{2\rho_i}(x_i)$. Then  we have
\begin{lemma}\label{sim} For $i\geq 1$, 
$$
M_i \sim \frac{1}{(1-k_i)^\frac{4m}{n-2m}}, ~~~~ \rho_i^{2m} \sim \frac{1}{2^i M_i} ~~~~\textmd{and} ~~~~  \lambda_i \sim \varepsilon_i^\frac{2}{n-2m}\rho_i^2.
$$
\end{lemma}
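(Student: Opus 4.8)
The plan is to verify each of the three relations by a direct computation from the definitions \eqref{defMi}, \eqref{defrhoi} and \eqref{deflambdai}, using only elementary estimates together with the explicit form of $\psi$ and the classical behaviour of the Riesz potential of a ball. Throughout I abbreviate $C=(2w(0))^{(n+2m)/(n-2m)}$ and use that $w(x_i)\sim1$ for every $i$ (since $|x_i|\le\delta_1<1$ by \eqref{defxi} and \eqref{defBi1}) and that $w(x)\sim(1+|x|)^{2m-n}$ on $\R^n$, with constants depending only on $n$, $m$ and $\sup k$. The estimate for $M_i$ is immediate: from $\tfrac12<k_i<1$ one has $k_i\sim1$, and the mean value theorem applied to $t\mapsto t^{(n-2m)/(4m)}$ on $[k_i,1]\subset[\tfrac12,1]$ gives $1-k_i^{(n-2m)/(4m)}\sim1-k_i$; inserting this into \eqref{defMi} yields $M_i\sim(1-k_i)^{-4m/(n-2m)}$.

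For $\rho_i$ I would use three standard facts about $I:=I_{2m}(\chi_{B_{2\rho}(x_i)})$: it is radially symmetric about $x_i$ and radially non-increasing, $I(x_i)=\kappa_{n,m}(2\rho)^{2m}$, and $I(x)\le C_{n,m}\rho^n|x-x_i|^{2m-n}$ whenever $|x-x_i|\ge4\rho$. Testing the inequality in \eqref{defrhoi} at $x=x_i$ gives at once the upper bound $\rho_i^{2m}\lesssim w(x_i)/(2^iCM_i)\sim1/(2^iM_i)$; in particular $\rho_i<1$ once $2^iM_i$ is large. For the matching lower bound I would check that $\rho:=c_0(2^iM_i)^{-1/(2m)}$, with $c_0=c_0(n,m,\sup k)$ small, satisfies the inequality of \eqref{defrhoi} at every $x$: on $\{|x-x_i|\le4\rho\}$ bound $I\le\kappa_{n,m}(2\rho)^{2m}$ and use $w(x)\gtrsim1$; on $\{|x-x_i|\ge4\rho\}$ bound $I\le C_{n,m}\rho^n|x-x_i|^{2m-n}$ and use $w(x)\gtrsim(1+|x-x_i|)^{2m-n}$, splitting off the far range $|x-x_i|\ge1$ in which $\rho^n\le\rho^{2m}$ (valid since $\rho<1$) preserves the scaling. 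In every region the left side is at most a constant times $c_0^{2m}/(2^iM_i)$, so small $c_0$ closes the estimate and $\rho_i^{2m}\sim1/(2^iM_i)$.

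For $\lambda_i$ I would substitute the explicit formulas for $w_\lambda$ and $w$ into \eqref{deflambdai}; after raising to the power $2/(n-2m)$ and maximising $|x|$ over each sphere $\{|x-x_i|=t\}$, the defining condition becomes
\[
\frac{\lambda\bigl(1+(|x_i|+t)^2\bigr)}{\lambda^2+t^2}\le A_i\quad\text{for all }t\ge\rho_i,\qquad A_i:=\Bigl(\varepsilon_i\,a^{(n-2m)/(4m)}(2b)^{-n/(2m)}\Bigr)^{2/(n-2m)}\sim\varepsilon_i^{2/(n-2m)}.
\]
Sending $t\to\infty$ forces $\lambda_i\le A_i$ and taking $t=\rho_i$ forces $\lambda_i/(\lambda_i^2+\rho_i^2)\le A_i$; since $A_i$ is small (using $\varepsilon_i\le2^{-i}$ and the normalisation $\sup k\ge k(0)=1$, with $n\ge2m+4$) these two bounds force $\lambda_i<\rho_i$, whence $\lambda_i\le2A_i\rho_i^2\lesssim\varepsilon_i^{2/(n-2m)}\rho_i^2$. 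Conversely, if $0<\lambda\le c_0\varepsilon_i^{2/(n-2m)}\rho_i^2$ with $c_0$ small, then $\lambda^2+t^2\ge t^2$ and $1+(|x_i|+t)^2\lesssim1+t^2$ give, for $t\ge\rho_i$, a left side $\lesssim\lambda(1+t^2)/t^2\lesssim\lambda/\rho_i^2\le A_i$; hence $\lambda_i\gtrsim\varepsilon_i^{2/(n-2m)}\rho_i^2$, and $\lambda_i\sim\varepsilon_i^{2/(n-2m)}\rho_i^2$.

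The hardest part will be this ``binding-constraint'' bookkeeping: one must confirm that among the infinitely many inequalities imposed --- over all $x\in\R^n$ in \eqref{defrhoi}, over all $t\ge\rho_i$ in \eqref{deflambdai} --- the decisive one sits at, or just outside, the point $x_i$, while the constraints near infinity are automatically met; this is precisely where the decay rates of $w$ and of the Riesz kernel, together with the strict inequality $n>2m$, enter. One must also keep every implied constant of the admissible type, depending only on $n$, $m$, $\inf k$ and $\sup k$; for the finitely many indices $i\le i_0$ all of $k_i$, $M_i$, $\rho_i$, $\lambda_i$ are fixed positive numbers, so the three relations hold there for trivial reasons.
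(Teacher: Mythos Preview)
Your approach is essentially the paper's: both handle $M_i$ by elementary calculus on $k_i\in(\tfrac12,1)$, $\rho_i$ via the standard two-regime estimate for $I_{2m}(\chi_{B_{2\rho}(x_i)})$, and $\lambda_i$ by reducing the defining inequality to a one-variable problem and locating the binding constraint at $t=\rho_i$. Your reduction for $\lambda_i$ (maximising $|x|$ over the spheres $\{|x-x_i|=t\}$) is, if anything, cleaner than the paper's rotation to $y=(y_1,y')$.

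One step does not quite work as written. To force $\lambda_i<\rho_i$ from $\lambda_i\le A_i$ and $\lambda_i/(\lambda_i^2+\rho_i^2)\le A_i$ you need $A_i^2<1/2$; your justification only yields $A_i\le 2^{-(3n+2m)/(2m(n-2m))}$, and this exponent drops below $1/2$ once $m\ge4$ and $n$ is large (e.g.\ $m=4$, $n\ge40$). The detour through $\lambda_i<\rho_i$ is, however, unnecessary: keep the full constraint at $t=\rho_i$,
\[
\lambda_i\bigl[\,1+(|x_i|+\rho_i)^2-A_i\lambda_i\,\bigr]\le A_i\rho_i^2,
\]
and use $\lambda_i\le A_i<1$ to bound the bracket below by $1-A_i^2>0$, a fixed constant in $n,m,a,b$; this gives $\lambda_i\lesssim A_i\rho_i^2$ directly (this is exactly how the paper closes the estimate). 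Finally, discard your last sentence: the constants hidden in $\sim$ are required to depend only on $n,m,\inf k,\sup k$, so ``fixed positive numbers for $i\le i_0$'' is not a valid justification --- but your main arguments are already uniform in $i$, so nothing is lost.
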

\begin{proof} Since $k_i \in (0, 1/2)$, the first estimate is obvious. For the second, we only need to notice that
$$
\frac{1}{C} \rho_i^{2m} \leq I_{2m}( \chi_{B_{2\rho_i}(x_i)} ) \leq C \rho_i^{2m} ~~~~~~ \textmd{for} ~ |x-x_i| \leq 2 \rho_i
$$
and
$$
\frac{1}{C} \rho_i^n |x-x_i|^{2m-n} \leq I_{2m}( \chi_{B_{2\rho_i}(x_i)} ) \leq C \rho_i^n |x-x_i|^{2m-n} ~~~~~~ \textmd{for} ~ |x-x_i| \geq 2 \rho_i
$$
for some positive constant $C$ depending on $n$ and $m$.

For the last, the inequality $w_\lambda(x-x_i) \leq \varepsilon_i a^\frac{n-2m}{4m} w(x)$ holds for $|x-x_i| \geq \rho_i$ if and only if
$$
\lambda (1 + |x|^2) \leq \varepsilon_i^\frac{2}{n-2m} a^\frac{1}{2m} (2b)^{ -\frac{n}{m(n-2m) }} (\lambda^2 + |x-x_i|^2) ~~~~~~ \textmd{for} ~ |x-x_i| \geq \rho_i.
$$
After some rotations, the above inequality is equivalent to
$$
\varepsilon_i^\frac{2}{n-2m} a^\frac{1}{2m} (2b)^{ -\frac{n}{m(n-2m)} } (\lambda^2 + |y'|^2 + | y_1-|x_i| |^2) \geq \lambda (1 + |y'|^2 + |y_1|^2), 
$$
where $y_1 \in \R$, $y' \in \R^{n-1}$ and $| y_1-|x_i| |^2 + |y'|^2 \geq \rho_i^2$.  Comparing the coefficients of $|y'|^2$, we get
$$
\varepsilon_i^\frac{2}{n-2m} a^\frac{1}{2m} (2b)^{-\frac{n}{m(n-2m)}} \geq \lambda.
$$
We also need $\lambda$ satisfying
$$
\varepsilon_i^\frac{2}{n-2m} a^\frac{1}{2m} (2b)^{ -\frac{n}{m(n-2m)} } (\lambda^2 + | y_1-|x_i||^2) \geq \lambda (1 + |y_1|^2) \geq 0 ~~~~~~ \textmd{for} ~ | y_1-|x_i| | \geq \rho_i.
$$
This inequality holds if and only if
$$
\begin{cases}
\varepsilon_i^\frac{2}{n-2m} a^\frac{1}{2m} (2b)^{ -\frac{n}{m(n-2m)} } \rho_i^2 \geq \lambda [1 + (|x_i| + \rho_i)^2 - \varepsilon_i^\frac{2}{n-2m} a^\frac{1}{2m} (2b)^{ -\frac{n}{m(n-2m)} } \lambda],\\
\varepsilon_i^\frac{2}{n-2m} a^\frac{1}{2m} (2b)^{ -\frac{n}{m(n-2m)} } \rho_i^2 \geq \lambda [1 + (|x_i| - \rho_i)^2 - \varepsilon_i^\frac{2}{n-2m} a^\frac{1}{2m} (2b)^{ -\frac{n}{m(n-2m)} } \lambda].
\end{cases}
$$
Since both $1 + (|x_i| + \rho_i)^2 - \varepsilon_i^\frac{2}{n-2m} a^\frac{1}{2m} (2b)^{ -\frac{n}{m(n-2m)} } \lambda$ and $1 + (|x_i| - \rho_i)^2 - \varepsilon_i^\frac{2}{n-2m} a^\frac{1}{2m} (2b)^{ -\frac{n}{m(n-2m)} } \lambda$ are bounded between two positive constants depending only on $n$, $m$, $a$ and $b$, we get the desired estimate.
\end{proof}  

By Lemma \ref{sim}, after increasing the values of $k_i$ for certain values of $i$ while holding $\varepsilon_i$ fixed, we can assume for $i \geq 1$ that 
\begin{equation}\label{3seq1}
M_i > 3^i, ~~~~~~ \rho_i \in (0,r_i), ~~~~~~ \lambda_i \in (0,\delta_2),
\end{equation}
and
\begin{equation}\label{3seq2}
k_i^\frac{n+2m}{4m} > \frac{ 1 + \left( \frac{1}{3} \right)^{n-2m} }{ 1 + \frac{n+2m}{n-2m} \left( \frac{1}{3} \right)^{n-2m} }, ~~~~~~ M_i^\alpha > \max \left\{ \frac{1}{ \varepsilon_i^\frac{4m}{n-2m} }, ~ 2^i \right\}, ~~~~~~ \lambda_i^\alpha < \frac{ \varepsilon_i^\frac{2m}{n-2m} }{2^i}, 
\end{equation}
where $\alpha = \alpha(m) \in (0,1/6)$ is a constant to be specified later.

Notice that for $1 \leq i \leq i_0$, $\rho_i$ and $\lambda_i$ do not change as $x_i$ moves on the sphere $|x| = \delta_1$. Therefore, we can require that the union of the line segments $\overline{x_1x_2}$, $\overline{x_2x_3}$, $\dots$, $\overline{x_{i_0-1}x_{i_0}}$, $\overline{x_{i_0}x_1}$ be a regular $i_0$-gon with side length $4\rho_1$. Thus,
\begin{equation}\label{distBiBj}
\textmd{dist} ( B_{\rho_i}(x_i),B_{\rho_j}(x_j) ) \geq \rho_i + \rho_j ~~~~~~ \textmd{for} ~ 1 \leq i < j \leq i_0.
\end{equation}
By \eqref{defBi2}, \eqref{defBi3} and \eqref{3seq1}, the inequality \eqref{distBiBj} also holds for $1 \leq i < j$.

For $i \geq 1$, define
$$
u_i(x) := w_{\lambda_i}(x-x_i).
$$
Then one can check that
\begin{equation}\label{minBjww}
\min\limits_{ x \in B_{2\rho_j}(x_j) } \frac{ u_{j+1}(x) }{ u_{j-1}(x) } > \left( \frac{1}{3} \right)^{n-2m} ~~~~~~ \textmd{for} ~ 2 \leq j \leq i_0 - 1
\end{equation}
and a similar inequality holds when $j$ is $1$ or $i_0$.

Here we  also give  some inequalities which will be used later. By \eqref{3seq2} and Lemma \ref{sim} we have  for $1 \leq j \leq i_0$ that 
\begin{equation}\label{minZkjsumMj}
\aligned
& \min\limits_{ x\in B_{2\rho_j}(x_j) } Z \left( k_j^\frac{n+2m}{4m} , \sum\limits_{i=1,i\neq j}^{i_0} u_i(x) \right)\\
= & \min\limits_{ x\in B_{2\rho_1}(x_1) } Z \left( k_1^\frac{n+2m}{4m} , \sum\limits_{i=2}^{i_0} u_i(x) \right)\\
\geq & \min\limits_{ x\in B_{2\rho_1}(x_1) } Z \left( k_1^\frac{n+2m}{4m} , u_2(x) \right)\\
\geq & Z \left( k_1^\frac{n+2m}{4m} , \psi(6\rho_2,\lambda_2) \right) \sim \frac{1}{1-k_1} \left( \frac{\lambda_1}{\lambda_1^2+(6\rho_1)^2} \right)^\frac{n-2m}{2} \sim \frac{1}{1-k_1} \left( \frac{\lambda_1}{\rho_1^2} \right)^\frac{n-2m}{2}\\
\sim & \frac{\varepsilon_1}{1-k_1} \geq \frac{1}{(1-k_1) M_1^\frac{\alpha(n-2m)}{4m}} \sim M_1^\frac{(1-\alpha)(n-2m)}{4m} = M_j^\frac{(1-\alpha)(n-2m)}{4m}.
\endaligned
\end{equation}
Thus, by increasing $k_1$ (recall that $k_1 = k_2 = \cdots = k_{i_0}$)  we have
\begin{equation}\label{minZkjijw0}
\min\limits_{x\in B_{2\rho_j}(x_j)} Z \left( k_j^\frac{n+2m}{4m}, \sum\limits_{i=1,i\neq j}^{i_0} u_i(x) \right) > w(0) ~~~~~~ \textmd{for} ~ 1 \leq j \leq i_0.
\end{equation}
By Lemma \ref{sim},
\begin{equation}\label{ZkjMj}
Z \left( k_j^\frac{n+2m}{4m} , \frac{1}{ 2M_j^\frac{\alpha(n-2m)}{4m} } \right) \sim \frac{1}{1-k_j} \frac{1}{ M_j^\frac{\alpha(n-2m)}{4m} }\sim M_j^\frac{(1-\alpha)(n-2m)}{4m} ~~~~~~ \textmd{for} ~ j \geq 1.
\end{equation}
Therefore, by increasing each term of the sequence $\{k_j\}_{j=1}^\infty$, we also have
$$
Z \left( k_j^\frac{n+2m}{4m} , \frac{1}{ 2M_j^\frac{\alpha(n-2m)}{4m} } \right) > w(0) ~~~~~~ \textmd{for} ~ j \geq 1.
$$
Then,  by \eqref{deflambdai}, \eqref{defab} and \eqref{defepsiloni} we have for $j\geq 1$ and $|x-x_j| \geq \rho_j$ that 
\begin{equation}\label{wxjljw0Z}
\aligned
u_j(x) & \leq \varepsilon_j a^\frac{n-2m}{4m} w(0)\\
& < w(0) < Z \left( k_j^\frac{n+2m}{4m} , \frac{1}{ 2M_j^\frac{\alpha(n-2m)}{4m} } \right).
\endaligned
\end{equation}

It follows from Lemma \ref{sim} and \eqref{defepsiloni} that
\begin{equation}\label{maxddspsi}
\max\limits_{ s \geq \rho_j } \left| \frac{d}{ds}( \psi(s,\lambda_j) ) \right| \sim \left( \frac{\lambda_j}{\rho_j^2} \right)^\frac{n-2m}{2} \frac{1}{\rho_j} \sim \varepsilon_j 2^\frac{j}{2m} M_j^\frac{1}{2m} < M_j^\frac{1}{2m} ~~~~~~ \textmd{for} ~ j \geq 1.
\end{equation}

\vskip0.10in

{\it Step 2.  Estimating the sum of the bubbles $u_i$.}     By \eqref{deflambdai} and \eqref{defepsiloni} we have 
\begin{equation}\label{uileqeiw}
u_i \leq \varepsilon_i a^\frac{n-2m}{4m} w ~~~~~~ \textmd{in} ~ \R^n \setminus B_{\rho_i}(x_i)
\end{equation}
and
\begin{equation}\label{sumuleqw}
\sum\limits_{i=1}^\infty u_i \leq a^\frac{n-2m}{4m} w ~~~~~~ \textmd{in} ~ \R^n - \bigcup\limits_{i=1}^\infty B_{\rho_i}(x_i).
\end{equation}
By \eqref{defepsiloni} and \eqref{uileqeiw}, we know that  $\sum\limits_{i=1}^\infty u_i \in C^\infty(\R^n\setminus\{0\})$ satisfies   
\begin{equation}\label{LmuiLsui}
\begin{cases}
(-\Delta)^m \left( \sum\limits_{i=1}^\infty u_i \right) = \sum\limits_{i=1}^\infty u_i^\frac{n+2m}{n-2m}\\
(-\Delta)^s \left( \sum\limits_{i=1}^\infty u_i \right) = \sum\limits_{i=1}^\infty (-\Delta)^s u_i > 0, ~ s = 1,2,\dots,m-1
\end{cases} ~~~~~~ \textmd{in} ~ \R^n \setminus \{0\}.
\end{equation}
By increasing $k_i$ for each $i$, we can assume that
\begin{equation}\label{defuixi}
u_i(x_i) = c_{n,m} \lambda_i^{ -\frac{n-2m}{2} } > i \varphi(|x_i|) ~~~~~~ \textmd{for} ~ i \geq 1
\end{equation} 
and $u_i + |\nabla u_i| < 2^{-i}$ in $\R^n \setminus B_{2r_i}(x_i)$, $i \geq 1$.  Thus  by \eqref{defBi2} and \eqref{defBi3},
\begin{equation}\label{ujnablauj2j}
u_i + |\nabla u_i| < 2^{-i} ~~~~~~ \textmd{in} ~ B_{2r_j}(x_j) 
\end{equation} 
when $i \neq j$ and either $i \geq 1$ and $j > i_0$ or $i > i_0$ and $1 \leq j \leq i_0$.  Again, by increasing $k_i$ for $i > i_0$, we can force $u_i$ and $M_i$ to satisfy
\begin{equation}\label{sumuminu1}
\sum\limits_{i = i_0 + 1}^\infty u_i(x) < \frac{1}{2} \min\limits_{1 \leq i \leq i_0} u_i(x) ~~~~~~ \textmd{for} ~ |x| \geq \delta_2,
\end{equation}
\begin{equation}\label{sumu12}
\sum\limits_{i = i_0 + 1, i \neq j}^\infty u_i(x) < \frac{u_1}{2} ~~~~~~ \textmd{in} ~ B_{2r_j}(x_j),~j > i_0
\end{equation}
and
\begin{equation}\label{Mjminu1}
\frac{1}{M_j^\frac{\alpha(n-2m)}{4m}} < \min\limits_{|x| \leq \delta} u_1(x) ~~~~~~ \textmd{for} ~ j > i_0.
\end{equation}

It follows from \eqref{ujnablauj2j} and \eqref{uileqeiw} that
\begin{equation}\label{sumuiuinC}
\sum\limits_{i=1, i \neq j}^\infty u_i + u_i^\frac{n+2m}{n-2m} \leq C ~~~~~~ \textmd{in} ~ B_{\rho_j}(x_j), ~ j \geq 1. 
\end{equation}
Similarly, by \eqref{ujnablauj2j}, \eqref{uileqeiw}, Lemma \ref{sim} and \eqref{defi0},
$$
\aligned
& \sum\limits_{i=1, i \neq j}^\infty |\nabla u_i| + u_i^\frac{4m}{n-2m} |\nabla u_i| \leq \sum\limits_{i=1, i \neq j}^{i_0} |\nabla u_i| + u_i^\frac{4m}{n-2m} |\nabla u_i| + C\\
& ~~~ \leq \sum\limits_{i=1, i \neq j}^{i_0} u_i \frac{1}{\rho_j} + u_i^\frac{n+2m}{n-2m} \frac{1}{\rho_j} + C\\
& ~~~ \leq C 2^\frac{j}{2m} M_j^\frac{1}{2m} \leq C 2^\frac{i_0}{2m} M_j^\frac{1}{2m} \leq C M_j^\frac{1}{2m} ~~~~~~ \textmd{in} ~ B_{\rho_j}(x_j), ~ 1 \leq j \leq i_0,  
\endaligned
$$
and by \eqref{ujnablauj2j} and \eqref{uileqeiw},
$$
\sum\limits_{i=1, i \neq j}^\infty |\nabla u_i| + u_i^\frac{4m}{n-2m} |\nabla u_i| \leq C ~~~~~~ \textmd{in} ~ B_{\rho_j}(x_j), ~ j > i_0. 
$$
Thus, we get
\begin{equation}\label{sumnablauiuinnablaui}
\sum\limits_{i=1, i \neq j}^\infty |\nabla u_i| + u_i^\frac{4m}{n-2m} |\nabla u_i| \leq C M_j^\frac{1}{2m} ~~~~~~ \textmd{in} ~ B_{\rho_j}(x_j), ~ j \geq 1. 
\end{equation}

\vskip0.10in

{\it Step 3.  Constructing  the correction function $u_0$.}     Since $n \geq 2m+4$, by Lemma \ref{sim} and \eqref{3seq1} we have
\begin{equation}\label{1kirhoi}
\frac{1-k_i}{\rho_i} \sim \frac{2^\frac{i}{2m} M_i^\frac{1}{2m}}{ M_i^\frac{n-2m}{4m} } \sim \frac{2^\frac{i}{2m}}{M_i^\frac{n-2m-2}{4m}} \leq \frac{2^\frac{i}{2m}}{M_i^\frac{1}{2m}}  \leq \left( \frac{2}{3} \right)^\frac{i}{2m} \to 0 ~~~~~~ \textmd{as} ~ i \to \infty.
\end{equation}
Let $\eta:[0,\infty) \to [0,1]$ be a $C^\infty$ cut-off function satisfying $\eta(t) = 1$ for $0 \leq t \leq 1$ and $\eta(t) = 0$ for $t \geq 3/2$. Define
\begin{equation}\label{defkappa}
\kappa(x) = k(x) + \sum\limits_{i=1}^\infty (k_i-k(x))\eta_i(x) ~~~~~~ \textmd{for} ~ x \in \R^n, 
\end{equation}
where $\eta_i(x) = \eta(|x-x_i|/\rho_i)$. Since $\{\eta_i\}_{i=1}^\infty$ have disjoint supports contained in $B_{2\delta_1} \setminus \{0\}$, $\kappa$ is well-defined. Recall that $k \equiv 1$ in $B_{\delta}$, we obtain $\kappa(0) = k(0) = 1$, $\kappa \leq k$ in $\R^n$ and $\kappa(x) = k(x)$ for $|x| \geq 2 \delta_1$. By \eqref{defab} and \eqref{defki} we have
\begin{equation}\label{infka}
\inf \limits_{\R^n} \kappa \geq a.
\end{equation}
Since $k \equiv 1$ in $B_\delta$ and then 
\begin{equation}\label{nablakappa}
\nabla \kappa(x) = \sum\limits_{i=1}^\infty \frac{k_i-1}{\rho_i} \eta'\left( \frac{|x-x_i|}{\rho_i} \right) \frac{x-x_i}{|x-x_i|} ~~~~~~ \textmd{for} ~ x \in B_\delta,
\end{equation}
it follows from \eqref{1kirhoi} that $\kappa \in C^1(\R^n)$ and $\nabla \kappa(0) = 0$.

By \eqref{defrhoi},
\begin{equation}\label{ImMw2}
0 < I_{2m} \overline{M} < \frac{w}{2} ~~~~~~ \textmd{in} ~ \R^n, 
\end{equation}
where
$$
\overline{M}(x):=
\begin{cases}
(2w(0))^\frac{n+2m}{n-2m} M_i ~~~~~~ & \textmd{in} ~ B_{\rho_i}(x_i), ~ i \geq 1,\\
0 ~~~~~~ & \textmd{in} ~ \R^n - \bigcup\limits_{i=1}^\infty B_{2\rho_i}(x_i),\\
(2w(0))^\frac{n+2m}{n-2m} M_i \left( 2-\frac{|x-x_i|}{\rho_i} \right) ~~~~~~ & \textmd{in} ~ B_{2\rho_i}(x_i) \setminus B_{\rho_i}(x_i), ~ i \geq 1. 
\end{cases}
$$
Since $\overline{M}$ is locally Lipschitz continuous in $\R^n \setminus \{0\}$, we have  $\overline{v} : = w/(2b) + I_{2m} \overline{M} \in C_{loc}^{2m,\beta}(\R^n \setminus \{0\})$ for any $0 < \beta < 1$. By \eqref{Lmw}, 
\begin{equation}\label{defoverv}
\begin{cases}
(-\Delta)^s \overline{v} = (-\Delta)^s w/(2b) + I_{2(m-s)} \overline{M} > 0, ~ s = 1,2,\dots,m-1\\
(-\Delta)^m \overline{v} = (2b)^\frac{n+2m}{n-2m} w^\frac{n+2m}{n-2m} + \overline{M}
\end{cases} ~~~~~~ \textmd{in} ~ \R^n \setminus \{0\}, 
\end{equation} 
where $I_{2(m-s)} $ is the Riesz potential operator of order $2(m-s)$.  It follows from \eqref{ImMw2} that
\begin{equation}\label{w2bvw}
\frac{w}{2b} < \overline{v} < w ~~~~~~ \R^n.
\end{equation}

Define $\underline{H}:\R^n \times [0,\infty) \to \R$ by
\begin{equation}\label{underH}
\underline{H}(x,v) = \kappa(x) \left( v + \sum\limits_{i=1}^\infty u_i(x) \right)^\frac{n+2m}{n-2m} - \sum\limits_{i=1}^\infty u_i(x)^\frac{n+2m}{n-2m}. 
\end{equation}
Then we have
$$
\underline{H}(x,v) = f(U(x), \kappa(x) , P(x,v)), 
$$
where
$$
U(x):= \left( \sum\limits_{i=1}^\infty u_i(x)^\frac{n+2m}{n-2m} \right)^\frac{n-2m}{n+2m} ~~~~~~ \textmd{and} ~~~~~~ P(x,v):= v + \sum\limits_{i=1}^\infty u_i(x) - U(x).
$$

Define $H:\R^n \times [0,\infty) \to (0,\infty)$ by
\begin{equation}\label{defH}
H(x,v) = F(U(x),  \kappa(x),  P(x,v)).
\end{equation}
Then
\begin{equation}\label{HleqM}
H(x,v) \leq M(\kappa(x), P(x,v)) ~~~~~~ \textmd{when} ~ \kappa(x) < 1.
\end{equation}
Moreover,  by the definition of $F$  we have that  $H(x,v) = \underline{H}(x,v)$ if and only if either $\kappa(x) < 1$ and $U(x) \leq Z(\kappa(x), P(x,v))$ or $\kappa(x) \geq 1$.

For $x \in \R^n - \bigcup\limits_{i=1}^\infty B_{\rho_i}(x_i)$ and $\kappa(x) < 1$, we have
$$
\aligned
U(x) & \leq \sum\limits_{i=1}^\infty u_i(x) \leq a^\frac{n-2m}{4m} w(x) ~~~~~~ \textmd{by} ~ \eqref{sumuleqw}\\
& \leq \frac{ w(x) \kappa(x)^\frac{n-2m}{4m} }{ 1 - \kappa(x)^\frac{n-2m}{4m} } ~~~~~~ \textmd{by} ~ \eqref{infka}\\
& \leq \frac{ P(x,w(x)) \kappa(x)^\frac{n-2m}{4m} }{ 1 - \kappa(x)^\frac{n-2m}{4m} }\\
& = Z( \kappa(x),P(x,w(x)) ) ~~~~~~ \textmd{by} ~ \eqref{defMZ}.
\endaligned
$$
Hence
$$
H(x,w(x)) = \underline{H}(x,w(x)) ~~~~~~ \textmd{for} ~ x \in \R^n - \bigcup\limits_{i=1}^\infty B_{\rho_i}(x_i).
$$
Thus, for $x \in (\R^n \setminus \{0\}) - \bigcup\limits_{i=1}^\infty B_{\rho_i}(x_i)$ and $0 \leq v \leq w(x)$  we have
\begin{equation}\label{HxvLmvx}
\aligned
H(x,v) & \leq H(x,w(x)) = \underline{H}(x,w(x)) \leq \kappa(x) \left( w(x) + \sum\limits_{i=1}^\infty u_i(x) \right)^\frac{n+2m}{n-2m}\\
& \leq b(2w(x))^\frac{n+2m}{n-2m} \leq (-\Delta)^m \overline{v}(x)
\endaligned
\end{equation}
by \eqref{sumuleqw}, \eqref{defab} and \eqref{defoverv}.

For $x \in B_{\rho_i}(x_i)$ and  $i \geq 1$ we have $\kappa(x) \equiv  k_i < 1$. Hence, from \eqref{HleqM} we obtain   for $x \in B_{\rho_i}(x_i)$ and $0 \leq v \leq w(x)$ that 
\begin{equation}\label{HxvLmvx2}
\aligned
H(x,v) & \leq M(k_i,P(x,v)) = \frac{ k_i P(x,v)^\frac{n+2m}{n-2m} }{\left( 1 - k_i^\frac{n-2m}{4m} \right)^\frac{4m}{n-2m}}\\
& \leq M_i \Big(  v + \sum\limits_{j=1}^\infty u_j(x)  -U(x) \Big)^\frac{n+2m}{n-2m} ~~~~~~ \textmd{by} ~ \eqref{defMi}\\
& \leq M_i \Big( v + \sum\limits_{j=1, j \neq i}^\infty  u_j(x) \Big)^\frac{n+2m}{n-2m}\\
& \leq M_i (2w(x))^\frac{n+2m}{n-2m} ~~~~~~ \textmd{by} ~ \eqref{uileqeiw}\\
& \leq M_i (2w(0))^\frac{n+2m}{n-2m} = \overline{M}(x) \leq (-\Delta)^m \overline{v}(x)   ~~~~~~ \textmd{by} ~ \eqref{defoverv}. 
\endaligned
\end{equation}
This  together with \eqref{HxvLmvx} implies that
\begin{equation}\label{supersol}
H(x,v) \leq (-\Delta)^m \overline{v}(x) ~~~~~~ \textmd{for} ~ x \in \R^n \setminus \{0\} ~ \textmd{and} ~ 0 \leq v \leq w(x).
\end{equation}

Hence, by the nonnegativity of $H$,   \eqref{w2bvw} and \eqref{supersol} we can use  $\underline{v} \equiv  0$ and $\overline{v}$ as sub- and super-solutions of the problem 
$$
(-\Delta)^m u = H(x,u) ~~~~~~ \textmd{in} ~ \R^n \setminus \{0\}.
$$
Now,  applying  the method of sub- and super-solutions  we  can get  the desired correction function $u_0$. 
\begin{lemma}\label{subsupermethod} There exists a $C^{2m}(\R^n \setminus \{0\})$ solution $u_0$ of 
\begin{equation}\label{defu0}
\begin{cases}
(-\Delta)^m u_0 = H(x,u_0)\\
0 \leq (-\Delta)^s u_0 \leq (-\Delta)^s \overline{v}, ~ s = 1,2,\dots,m-1\\
0 \leq u_0 \leq \overline{v} \leq w
\end{cases} ~~~~~~ \textmd{in} ~ \R^n \setminus \{0\}.
\end{equation}
\end{lemma}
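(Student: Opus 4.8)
The plan is to solve \eqref{defu0} by a monotone iteration, carried out not for the differential equation itself --- the polyharmonic operator $(-\Delta)^m$ has no maximum principle --- but for the equivalent integral equation associated with the Riesz potential $I_{2m}$. First I would record that $\overline{v}$ equals the Riesz potential of its own $(-\Delta)^m$: with $g := (-\Delta)^m\overline{v} = (2b)^{\frac{n+2m}{n-2m}}w^{\frac{n+2m}{n-2m}} + \overline{M} \ge 0$ (see \eqref{defoverv}), one has $\overline{v} = I_{2m}(g)$, since $w/(2b)$ is a bubble --- hence equal to $I_{2m}((-\Delta)^m(w/(2b)))$ by \eqref{Lmw} and the $|x|^{2m-n}$ decay of the bubble (so the difference is a polyharmonic function vanishing at infinity) --- while $I_{2m}\overline{M}$ is a potential by definition. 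As $g$ decays like $|x|^{-(n+2m)}$ at infinity and is locally bounded on $\R^n\setminus\{0\}$, the potentials $I_{2m}(g)$ and $I_{2(m-s)}(g)$ converge and the composition identities $(-\Delta)^sI_{2m}(g) = I_{2(m-s)}(g)$, $s=1,\dots,m$, hold; the same is true for every measurable $h$ with $0\le h\le g$.

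Next I would introduce the operator $Tv := I_{2m}(H(\cdot,v))$ on the order interval $\{v:0\le v\le\overline{v}\}$. Since $H\ge 0$ and $H(x,\cdot)$ is non-decreasing (because $P(x,\cdot)$ is increasing and $F$ is non-decreasing in its third argument), $T$ is well-defined there and order-preserving. Moreover $T$ maps this interval into itself: $T0 = I_{2m}(H(\cdot,0))\ge 0$, and since $\overline{v}\le w$ by \eqref{w2bvw}, \eqref{supersol} gives $H(\cdot,\overline{v})\le(-\Delta)^m\overline{v}=g$, whence $T\overline{v}=I_{2m}(H(\cdot,\overline{v}))\le I_{2m}(g)=\overline{v}$. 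Setting $u^{(0)}:=\overline{v}$ and $u^{(k+1)}:=Tu^{(k)}$, monotonicity produces a decreasing sequence $\overline{v}=u^{(0)}\ge u^{(1)}\ge\cdots\ge 0$ with a pointwise limit $u_0$, $0\le u_0\le\overline{v}$. Because $0\le H(\cdot,u^{(k)})\le H(\cdot,\overline{v})\le g$ uniformly in $k$, the functions $u^{(k)}=I_{2m}(H(\cdot,u^{(k)}))$ are uniformly bounded in $W^{2m,p}_{\loc}(\R^n\setminus\{0\})$ for every $p<\infty$; combined with pointwise convergence this upgrades the convergence to $C^{2m-1,\gamma}_{\loc}(\R^n\setminus\{0\})$, and letting $k\to\infty$ (dominated convergence, using continuity of $H$ in its second variable) gives $u_0=I_{2m}(H(\cdot,u_0))$, i.e. $(-\Delta)^mu_0=H(x,u_0)$ on $\R^n\setminus\{0\}$.

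It remains to upgrade the regularity and read off the sign conditions. Recall that $\sum u_i\in C^\infty(\R^n\setminus\{0\})$, so $U$ and $P(\cdot,v)$ are smooth in $x$ on $\R^n\setminus\{0\}$; together with $\kappa\in C^1$ and $F\in C^1$ this shows $x\mapsto H(x,u_0(x))$ lies in $C^{\gamma}_{\loc}(\R^n\setminus\{0\})$ once $u_0\in C^{2m-1,\gamma}_{\loc}$, and Schauder estimates applied to $u_0=I_{2m}(H(\cdot,u_0))$ give $u_0\in C^{2m,\gamma}_{\loc}(\R^n\setminus\{0\})$, so the equation holds classically. For the intermediate signs, $(-\Delta)^su_0=I_{2(m-s)}(H(\cdot,u_0))\ge 0$ since $H\ge 0$, while $H(\cdot,u_0)\le H(\cdot,\overline{v})\le(-\Delta)^m\overline{v}$ forces $(-\Delta)^su_0=I_{2(m-s)}(H(\cdot,u_0))\le I_{2(m-s)}((-\Delta)^m\overline{v})=(-\Delta)^s\overline{v}$ for $s=1,\dots,m-1$; together with $0\le u_0\le\overline{v}\le w$ this is exactly \eqref{defu0}.

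The main difficulty is not the iteration, which is routine once set up, but the passage to the integral formulation: one must verify that $\overline{v}$ genuinely equals $I_{2m}((-\Delta)^m\overline{v})$ --- this is where the explicit construction $\overline{v}=w/(2b)+I_{2m}\overline{M}$ and the decay of the bubble $w$ are essential, since a general polyharmonic function on $\R^n\setminus\{0\}$ need not be a Riesz potential --- and that the composition identities $(-\Delta)^sI_{2m}=I_{2(m-s)}$ apply to $H(\cdot,u_0)$. These facts, rather than any comparison principle for $(-\Delta)^m$, are what make the monotone ("sub- and super-solution") scheme work in the higher-order setting. A secondary point, already prepared in \emph{Step 3}, is that the order interval $\{0\le v\le\overline{v}\}$ be stable under $T$, which is precisely the content of \eqref{supersol} together with $\overline{v}\le w$ from \eqref{w2bvw}.
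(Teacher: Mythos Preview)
Your argument is correct (modulo the harmless slip ``$u^{(k)}=I_{2m}(H(\cdot,u^{(k)}))$'' where you meant $u^{(k)}=I_{2m}(H(\cdot,u^{(k-1)}))$), but it is not the route the paper takes. The paper works on bounded annuli: for each integer $l\ge 2$ it solves the Navier problem
\[
\begin{cases}
(-\Delta)^m v = H(x,v) & \text{in } B_l\setminus\bar B_{1/l},\\
(-\Delta)^s v = 0 & \text{on } \partial(B_l\setminus\bar B_{1/l}),\ s=0,\dots,m-1,
\end{cases}
\]
by invoking a sub/super-solution result (Kusano--Swanson) with $\underline v\equiv 0$ and $\overline v$; the intermediate inequalities $0\le(-\Delta)^s v_l\le(-\Delta)^s\overline v$ come from the Navier structure. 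A diagonal/compactness argument then produces $u_0$ on $\R^n\setminus\{0\}$.

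Your approach instead recasts the problem as the global integral equation $u=I_{2m}(H(\cdot,u))$ and runs the monotone iteration there. This is genuinely different and has two advantages: it is self-contained (no external sub/super-solution theorem, no boundary-value problems for $(-\Delta)^m$ on annuli), and the sign conditions $0\le(-\Delta)^s u_0\le(-\Delta)^s\overline v$ drop out immediately from $(-\Delta)^s I_{2m}=I_{2(m-s)}$ and $0\le H(\cdot,u_0)\le g$. The price is that you must first justify the potential representation $\overline v=I_{2m}((-\Delta)^m\overline v)$ and the composition identities, which you correctly trace back to the explicit form $\overline v=w/(2b)+I_{2m}\overline M$ and the $|x|^{2m-n}$ decay of the bubble. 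The paper's version is shorter on the page because the existence step on each annulus is outsourced to a citation; yours makes the mechanism behind the higher-order sub/super method fully explicit.
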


\begin{proof}    
For each positive integer $l \geq 2$, we consider about the following problem  
\begin{equation}\label{localequ}
\begin{cases}
(-\Delta)^m v = H(x,v) ~~~~~~ & \textmd{in} ~ B_l \setminus \bar{B}_{1/l}, \\
(-\Delta)^s v = 0 & \textmd{on} ~ \partial(B_l \setminus \bar{B}_{1/l}), ~ s = 0,1,\dots,m-1.
\end{cases}
\end{equation}
Notice that $\underline{v}$, $\overline{v} \in C_{loc}^{2m,\beta}(\R^n \setminus \{0\})$ for any $0 < \beta < 1$, $H \in C^1(\R^n \setminus \{0\} \times [0,\infty))$ and $H$ is non-decreasing with respect to the last variable. By using the  method of sub- and super-solutions (see, e.g.,  Theorem 2.1 in \cite{Kusano1992}) the problem \eqref{localequ}  has  a  $C^{2m}$  solution $v_l$ satisfying $0 \leq v_l\leq \overline{v}$ and $0\leq (-\Delta)^s v_l \leq (-\Delta)^s \overline{v}$ for every $s = 1,2,\dots,m-1$.  It follows from standard elliptic theory that,   after passing to a subsequence, $\{v_l\}$  converges to a nonnegative function $u_0 \in C_{loc}^{2m}(\mathbb{R}^n \setminus \{0\})$ which satisfies  \eqref{defu0}.   
\end{proof}

{\it Step 4.  Defining  the solution $u$ and the function  $K$.}    Define $\overline{H}:\R^n \times [0,\infty) \to (0,\infty)$ by $\overline{H}(x,v) = F(U(x),k(x),P(x,v))$. Then $\underline{H} \leq H \leq \overline{H}$ since $\kappa \leq k$. In particular,
\begin{equation}\label{uHHoH}
\underline{H}(x,u_0(x)) \leq H(x,u_0(x)) \leq \overline{H}(x,u_0(x)) ~~~~~~ \textmd{for} ~ x \in \R^n \setminus \{0\}.
\end{equation}

For $|x| > \delta$ we have 
$$
\aligned
U(x)^\frac{n+2m}{n-2m} & = \sum\limits_{i=1}^\infty u_i(x)^\frac{n+2m}{n-2m}\\
& \leq i_0 2^\frac{n+2m}{n-2m} u_1(x)^\frac{n+2m}{n-2m} + u_1(x)^\frac{n+2m}{n-2m} ~~~~~~ \textmd{by} ~ \eqref{12xx1xx22} ~ \textmd{and} ~ \eqref{sumuminu1}\\
& \leq i_0 2^{ \frac{n+2m}{n-2m} + 1 } u_1(x)^\frac{n+2m}{n-2m} = \frac{ i_0^\frac{n+2m}{n-2m} }{ i_0^\frac{4m}{n-2m} } 2^\frac{2n}{n-2m} u_1(x)^\frac{n+2m}{n-2m}\\
& \leq \frac{ (2a)^\frac{n+2m}{4m} }{ 2^\frac{3n+2m}{n-2m} } i_0^\frac{n+2m}{n-2m} 2^\frac{2n}{n-2m} u_1(x)^\frac{n+2m}{n-2m} ~~~~~~ \textmd{by} ~ \eqref{defi0}\\
& \leq k(x)^\frac{n+2m}{4m} \left( \frac{i_0}{2} u_1(x) \right)^\frac{n+2m}{n-2m} ~~~~~~ \textmd{by} ~ \eqref{defab}\\
& \leq k(x)^\frac{n+2m}{4m} \left( \sum\limits_{i=1}^\infty u_i(x) \right)^\frac{n+2m}{n-2m} ~~~~~~ \textmd{by} ~ \eqref{12xx1xx22}.
\endaligned
$$
Therefore, for $k(x) < 1$ (which implies that $|x| > \delta$) and $v \geq 0$, 
$$
U(x) \leq k(x)^\frac{n-2m}{4m} \sum\limits_{i=1}^\infty u_i(x).
$$
From  \eqref{defMZ}, for $k(x) < 1$ and $v \geq 0$,  we have
$$
\aligned
U(x) & \leq \frac{ \left( \sum_{i=1}^\infty u_i(x) - U(x) \right) k(x)^\frac{n-2m}{4m} }{ 1 - k(x)^\frac{n-2m}{4m} }\\
& \leq Z(k(x), P(x,v)).
\endaligned
$$
Thus, by the definition of $F$ we obtain  for   $x \in \R^n$ and $v \geq 0$ that 
$$
\aligned
\overline{H}(x,v) & = f(U(x), k(x), P(x,v))\\
& = k(x) \left ( v + \sum\limits_{i=1}^\infty u_i(x) \right)^\frac{n+2m}{n-2m} - \sum\limits_{i=1}^\infty u_i(x)^\frac{n+2m}{n-2m},
\endaligned
$$
which together with \eqref{LmuiLsui}, \eqref{underH}, \eqref{defu0} and \eqref{uHHoH} implies that 
$$
u : = u_0 + \sum\limits_{i=1}^\infty u_i
$$
is a $C^{2m}$ positive solution of 
\begin{equation}\label{defu}
\begin{cases}
\kappa(x) u^\frac{n+2m}{n-2m} \leq (-\Delta)^m u \leq k(x)  u^\frac{n+2m}{n-2m}\\
(-\Delta)^s u > 0, ~ s = 1,2,\dots,m-1
\end{cases} ~~~~~~ \textmd{in} ~ \R^n \setminus \{0\}.
\end{equation}
It follows from \eqref{defuixi}, \eqref{sumuminu1} and \eqref{defu0} that $u$ satisfies \eqref{uneqphi} and \eqref{ubehainf}.

Define $K:\R^n \to (0,\infty)$ by
\begin{equation}\label{defK}
K(x) = \frac{(-\Delta)^m u(x)}{u(x)^\frac{n+2m}{n-2m}} ~~~~~~ \textmd{for} ~ x \in \R^n \setminus \{0\}
\end{equation}
and $K(0) = 1$. Then
\begin{equation}\label{KxHxu0x}
K(x) = \frac{ H(x,u_0(x)) + \sum_{i=1}^\infty u_i(x)^\frac{n+2m}{n-2m} }{ \left( u_0(x) + \sum_{i=1}^\infty u_i(x) \right)^\frac{n+2m}{n-2m} } ~~~~~~ \textmd{for} ~ x \in \R^n \setminus \{0\}
\end{equation}
and hence $K \in C^1(\R^n \setminus \{0\})$. It follows from \eqref{defu} and \eqref{defK} that
\begin{equation}\label{kKk}
\kappa(x) \leq K(x) \leq k(x) ~~~~~~ \textmd{for} ~ x \in \R^n \setminus \{0\}.
\end{equation}
Recall that $\kappa$, $k \in C^1(\R^n)$ and $\kappa(0) = K(0) = k(0) = 1$, we get $K \in C(\R^n)$,
\begin{equation}\label{nablakKk}
\nabla\kappa(0) = \nabla K(0) = \nabla k(0) = 0
\end{equation}
and
\begin{equation}\label{keqKeqk}
\kappa(x) = K(x) = k(x) ~~~~~~ \textmd{for} ~ |x| \geq 2 \delta_1.
\end{equation}

\vskip0.10in

{\it  Step 5.  Showing   that $K \in C^1(\R^n)$. }  We only need to show that
\begin{equation}\label{limnablaK}
\lim\limits_{|x| \to 0^+} \nabla K(x) = 0.
\end{equation}
Let $S = \{ x \in \R^n \setminus \{0\} ~ : ~ \underline{H}(x,u_0(x)) < H(x,u_0(x)) \}$. It follows from \eqref{underH} and \eqref{KxHxu0x} that
\begin{equation}\label{defS}
S = \{ x \in \R^n \setminus \{0\} ~ : ~ \kappa(x) < K(x) \}. 
\end{equation}
By  \eqref{kKk}, \eqref{nablakKk} and \eqref{defS} we obtain 
\begin{equation}\label{nkaeqnK}
\nabla \kappa(x) = \nabla K(x) ~~~~~~ \textmd{for} ~ x \in \R^n \setminus S
\end{equation}
and thus \eqref{limnablaK} holds for $x \in (\R^n \setminus \{0\}) - S$.  
Next  we show that \eqref{limnablaK} holds for $x\in S$.   It follows from \eqref{underH} and \eqref{defH} that
\begin{equation}\label{SHMUZ}
\begin{cases}
H(x,u_0(x)) = M(\kappa(x),P_0(x))\\
U(x) > Z(\kappa(x),P_0(x))
\end{cases} ~~~~~~ \textmd{for} ~ x \in S, 
\end{equation}
where $P_0(x):= P(x,u_0(x))$. Since $\kappa \geq k_j$ in $B_{2\rho_j}(x_j)$, by \eqref{defMZ}  we have 
\begin{equation}\label{UxZkjMj}
U(x) > Z(k_j,P_0(x)) = M_j^\frac{n-2m}{4m} P_0(x) ~~~~~~ \textmd{for} ~ x \in S \cap B_{2\rho_j}(x_j), ~ j \geq 1.
\end{equation}
For $x' \in (\R^n \setminus \{0\}) - \bigcup\limits_{i=1}^\infty B_{2\rho_i}(x_i)$, $\kappa(x') = k(x')$. Hence, by \eqref{kKk} and \eqref{defS}  we know that $x' \not \in S$. Consequently,
\begin{equation}\label{SsubBi}
S \subset \bigcup\limits_{i=1}^\infty B_{2\rho_i}(x_i).
\end{equation}
For $j \geq 1$ and $x \in S\cap B_{2\rho_j}(x_j)$,  by \eqref{UxZkjMj}  we have 
$$
U(x) > \frac{ k_j^\frac{n-2m}{4m} \left( \sum\limits_{i=1}^\infty u_i(x) - U(x) \right) }{ 1 - k_j^\frac{n-2m}{4m} },
$$
therefore
$$
U(x)> k_j^\frac{n-2m}{4m} \sum\limits_{i=1}^\infty u_i(x).
$$
Hence
\begin{equation}\label{sumijuifuj}
\sum\limits_{i=1, i\neq j}^\infty u_i(x)^\frac{n+2m}{n-2m} \geq f \left( u_j(x), k_j^\frac{n+2m}{4m}, \sum\limits_{i=1, i\neq j}^\infty u_i(x) \right) ~~~~~~ \textmd{for} ~ x \in S \cap B_{2\rho_j}(x_j), ~ j \geq 1.
\end{equation}
However, by Lemma \ref{Taliaferro}, \eqref{minBjww}, \eqref{sumuminu1} and \eqref{3seq2},  we have for $1 \leq j \leq i_0$ and $x \in S \cap B_{2\rho_j}(x_j)$ that 
$$
\aligned
\frac{ \sum_{i=1, i \neq j}^\infty u_i(x)^\frac{n+2m}{n-2m} }{ f \left( 0, k_j^\frac{n+2m}{4m}, \sum_{i=1, i\neq j}^\infty u_i(x) \right)} & = \frac{ \sum_{i=1, i \neq j}^\infty u_i(x)^\frac{n+2m}{n-2m} }{ k_j^\frac{n+2m}{4m} \left( \sum_{i=1, i\neq j}^\infty u_i(x) \right)^\frac{n+2m}{n-2m}}\\
& \leq \frac{ 1 + \left( \frac{1}{3} \right)^{n-2m} }{ k_j^\frac{n+2m}{4m} \left( 1 + \frac{n+2m}{n-2m} \left( \frac{1}{3} \right)^{n-2m} \right) }\\
& < 1.
\endaligned
$$
Thus, by the property of $f$, \eqref{sumijuifuj} and \eqref{minZkjijw0},
\begin{equation}\label{ujZkjsumw0}
u_j(x) > Z \left( k_j^\frac{n+2m}{4m}, \sum\limits_{ i=1, i \neq j }^\infty u_i(x) \right) > w(0) ~~~~~~ \textmd{for} ~ x \in S \cap B_{2\rho_j}(x_j), ~ 1 \leq j \leq i_0, 
\end{equation}
which together with \eqref{wxjljw0Z} implies that
\begin{equation}\label{SB2SBi0}
S \cap B_{2\rho_j}(x_j) = S \cap B_{\rho_j}(x_j) ~~~~~~ \textmd{for} ~ 1 \leq j \leq i_0.
\end{equation}
It follows from \eqref{minZkjsumMj} and \eqref{ujZkjsumw0} that
\begin{equation}\label{ujMjji0}
u_j \geq C M_j^\frac{(1-\alpha)(n-2m)}{4m} ~~~~~~ \textmd{in} ~ S \cap B_{2\rho_j}(x_j) , ~ 1 \leq j \leq i_0.
\end{equation}

For $j>i_0$ and $x \in B_{2 \rho_j}(x_j)$,  by Lemma \ref{Taliaferro}, \eqref{12xx1xx22}, \eqref{sumu12} and \eqref{3seq2} we get
$$
\aligned
\frac{ \sum_{i=1, i\neq j}^\infty u_i(x)^\frac{n+2m}{n-2m} }{ f \left( 0, k_j^\frac{n+2m}{4m}, \sum_{i=1, i\neq j}^\infty u_i(x) \right)} & = \frac{ \sum_{i=1, i\neq j}^\infty u_i(x)^\frac{n+2m}{n-2m} }{ k_j^\frac{n+2m}{4m}\left( \sum_{i=1, i\neq j}^\infty u_i(x) \right)^\frac{n+2m}{n-2m}}\\
& \leq \frac{ 1 + \frac{1}{2} }{ k_j^\frac{n+2m}{4m} \left( 1 + \frac{n+2m}{n-2m} \frac{1}{2} \right) } < 1.
\endaligned
$$
Thus, by the property of $f$, \eqref{sumijuifuj} and \eqref{Mjminu1},
$$
u_j(x) > Z \left( k_j^\frac{n+2m}{4m}, \sum\limits_{i=1, i\neq j}^\infty u_i(x) \right) > Z \left( k_j^\frac{n+2m}{4m}, \frac{1}{ 2 M_j^\frac{\alpha(n-2m)}{4m} } \right) ~~~ \textmd{for} ~ x\in S \cap B_{2\rho_j}(x_j), ~ j > i_0.
$$
Therefore it follows from \eqref{wxjljw0Z} and \eqref{SB2SBi0} that
\begin{equation}\label{SB2SB}
S \cap B_{2\rho_j}(x_j) = S \cap B_{\rho_j}(x_j) ~~~~~~ \textmd{for}~ j \geq 1
\end{equation} 
and it follows from \eqref{ZkjMj} and \eqref{ujMjji0} that
\begin{equation}\label{ujMjj1}
u_j \geq C M_j^\frac{(1-\alpha)(n-2m)}{4m} ~~~~~~ \textmd{in} ~ S \cap B_{2\rho_j}(x_j) , ~ j \geq 1.
\end{equation}

Recall \eqref{KxHxu0x} and \eqref{SHMUZ},  we have 
$$
K(x) = \frac{ M_j P_0(x)^\frac{n+2m}{n-2m} + U(x)^\frac{n+2m}{n-2m} }{ ( P_0(x) + U(x) )^\frac{n+2m}{n-2m} } = \frac{ M_j \left( \frac{P_0(x)}{U(x)} \right)^\frac{n+2m}{n-2m} + 1 }{ \left( \frac{P_0(x)}{U(x)} + 1 \right)^\frac{n+2m}{n-2m} } ~~~~~~ \textmd{for} ~ x \in S \cap B_{\rho_j}(x_j) , ~ j \geq 1.
$$
Thus
$$
\nabla K = \frac{n+2m}{n-2m} \left( \frac{ M_j \left( \frac{P_0}{U} \right)^\frac{4m}{n-2m} - 1 }{ \left( \frac{P_0}{U} + 1 \right)^\frac{2n}{n-2m}} \right) \left( \nabla \frac{P_0}{U} \right) ~~~~~~ \textmd{in} ~ S \cap B_{\rho_j}(x_j), ~ j \geq 1
$$
and hence, by \eqref{UxZkjMj},
\begin{equation}\label{nablaK3t}
\aligned
|\nabla K| & \leq \frac{n+2m}{n-2m} \left| \nabla\frac{P_0}{U} \right|\\
& \leq \frac{n+2m}{n-2m} \left[ \left| \nabla\frac{u_0}{U} \right| + \left| \nabla\frac{\sum_{i=1, i \neq j}^\infty u_i}{U} \right| + \left| \nabla\frac{u_j}{U} \right| \right] ~~~~~~ \textmd{in} ~ S \cap B_{\rho_j}(x_j), ~ j \geq 1.
\endaligned
\end{equation}
Now we estimate these three terms.  By  \eqref{sumnablauiuinnablaui} and \eqref{ujMjj1},  
$$
\aligned
\left| \nabla\frac{1}{U} \right| & = \left| \nabla \left[ (U^\frac{n+2m}{n-2m})^{\frac{2m-n}{2m+n}} \right] \right| = \left| \frac{n-2m}{n+2m} (U^\frac{n+2m}{n-2m})^{-\frac{n-2m}{n+2m}-1}\nabla U^\frac{n+2m}{n-2m} \right|\\
& = \left| \frac{ \sum_{i=1, i \neq j}^\infty u_i^\frac{4m}{n-2m}\nabla u_i + u_j^\frac{4m}{n-2m}\nabla u_j }{ U^\frac{2n}{n-2m} } \right|\\
& \leq C \left( \frac{ M_j^\frac{1}{2m} }{ M_j^\frac{(1-\alpha)n}{2m}} + \frac{|\nabla u_j|}{u_j^2} \right) ~~~~~~ \textmd{in} ~ S \cap B_{\rho_j}(x_j), ~ j \geq 1.
\endaligned
$$
Hence, by \eqref{sumuiuinC}, \eqref{sumnablauiuinnablaui}, \eqref{defu0} and \eqref{ujMjj1},
\begin{equation}\label{nablau0U}
\aligned
\left| \nabla \frac{u_0}{U} \right| & = \left| \frac{\nabla u_0}{U} + u_0 \nabla\frac{1}{U} \right|\\
& \leq C \left( \frac{ |\nabla u_0| }{ M_j^\frac{(1-\alpha)(n-2m)}{4m} } +  \frac{ M_j^\frac{1}{2m} }{ M_j^\frac{(1-\alpha)n}{2m} } + \frac{|\nabla u_j|}{u_j^2} \right) ~~~~~~ \textmd{in} ~ S \cap B_{\rho_j}(x_j), ~ j \geq 1
\endaligned
\end{equation}
and
\begin{equation}\label{nablasumU}
\aligned
\left| \nabla\frac{\sum_{i=1, i \neq j}^\infty u_i}{U} \right| & \leq \left| \nabla\frac{1}{U} \right| \sum_{i=1, i \neq j}^\infty u_i +  \frac{1}{U} \left| \nabla\sum_{i=1, i \neq j}^\infty u_i \right|\\
& \leq C \left( \frac{ M_j^\frac{1}{2m} }{ M_j^\frac{(1-\alpha)n}{2m} } + \frac{|\nabla u_j|}{u_j^2} + \frac{ M_j^\frac{1}{2m} }{ M_j^\frac{(1-\alpha)(n-2m)}{4m} } \right)\\
& \leq C \left( \frac{|\nabla u_j|}{u_j^2} + \frac{ M_j^\frac{1}{2m} }{ M_j^\frac{(1-\alpha)(n-2m)}{4m} } \right) ~~~~~~ \textmd{in} ~ S \cap B_{\rho_j}(x_j), ~ j \geq 1.
\endaligned
\end{equation}
Since
$$
\aligned
\nabla \frac{u_j}{U} = & \nabla \left( \frac{\sum_{i=1}^\infty u_i^\frac{n+2m}{n-2m}}{u_j^\frac{n+2m}{n-2m}} \right)^{-\frac{n-2m}{n+2m}} = \nabla \left( 1 + \frac{\sum_{i=1, i\neq j}^\infty u_i^\frac{n+2m}{n-2m}}{u_j^\frac{n+2m}{n-2m}} \right)^{-\frac{n-2m}{n+2m}}\\
= & -\frac{n-2m}{n+2m} \left( 1 + \frac{\sum_{i=1, i \neq j}^\infty u_i^\frac{n+2m}{n-2m}}{u_j^\frac{n+2m}{n-2m}} \right)^{-\frac{2n}{n+2m}} \Bigg[ \frac{ \nabla\sum_{i=1, i \neq j}^\infty u_i^\frac{n+2m}{n-2m} }{u_j^\frac{n+2m}{n-2m}}\\
& -\frac{n+2m}{n-2m} \left( \frac{\nabla u_j}{u_j^\frac{2n}{n-2m}} \right) \sum\limits_{i=1, i \neq j}^\infty u_i^\frac{n+2m}{n-2m} \Bigg],
\endaligned
$$
and by \eqref{sumuiuinC}, \eqref{sumnablauiuinnablaui}, \eqref{defu0} and \eqref{ujMjj1}, we get
\begin{equation}\label{nablaujU}
\left| \nabla \frac{u_j}{U} \right| \leq C \left( \frac{M_j^\frac{1}{2m}}{M_j^\frac{(1-\alpha)(n+2m)}{4m}} + \frac{|\nabla u_j|}{u_j^\frac{2n}{n-2m}} \right) ~~~~~~ \textmd{in} ~ S \cap B_{\rho_j}(x_j), ~ j \geq 1.
\end{equation}
By \eqref{nablaK3t}-\eqref{nablaujU}, we obtain for $j \geq 1$ and  $x \in S \cap B_{\rho_j}(x_j)$  that 
\begin{equation}\label{balabala} 
|\nabla K| \leq  C \left( \frac{ |\nabla u_0| }{ M_j^\frac{(1-\alpha)(n-2m)}{4m} } + \frac{|\nabla u_j|}{u_j^2} + \frac{ M_j^\frac{1}{2m} }{ M_j^\frac{(1-\alpha)(n-2m)}{4m} } + \frac{|\nabla u_j|}{u_j^\frac{2n}{n-2m}}\right).
\end{equation}

We now estimate $\nabla u_0$ in $B_{\rho_j}(x_j)$. By \eqref{defu0}  there exists a continuous function $h:\overline{B}_2 \to \R$ which satisfies $(-\Delta)^m h=0$  in $B_2$ such that  
$$
u_0(x) = \gamma_{n,m} \int_{B_4}\frac{H(y,u_0(y))}{|x-y|^{n-2m}}dy + h(x) ~~~~~~ \textmd{for} ~ 0 < |x| \leq 2, 
$$
where $\gamma_{n,m} = \Gamma(n/2-m)/(2^{2m}\pi^{n/2}\Gamma(m))$.
By \eqref{HxvLmvx}, \eqref{HxvLmvx2} and \eqref{defu0}, 
$$
H(x,u_0(x)) \leq 
\begin{cases}
(2w(0))^\frac{n+2m}{n-2m} M_j ~~~~~~  & \textmd{in} ~ B_{\rho_j}(x_j), ~ j \geq 1, \\
(2w(0))^\frac{n+2m}{n-2m} b & \textmd{in} ~ (\R^n \setminus \{0\}) - \bigcup\limits_{i=1}^\infty B_{\rho_i}(x_i).
\end{cases}
$$
It follows from \eqref{defu0} that $|h(x)| < C$ for $|x| \leq 2$. Thus  $|\nabla h(x)| < C$ for $|x| \leq 1$. Hence, for $x \in B_{\rho_j}(x_j)$,
$$
\aligned
|\nabla u_0(x)|\leq C \int_{B_4}\frac{H(y,u_0(y))}{|x-y|^{n-2m+1}}dy + C 
\leq C [ I_1(x) + I_2(x) + I_3(x) ] + C, 
\endaligned
$$
where
$$
I_1(x) := \int_{B_{\rho_j}(x_j)} \frac{M_j}{|x-y|^{n-2m+1}} dy \leq C M_j \rho_j^{2m-1} \leq C M_j^\frac{1}{2m} ~~~~~~ \textmd{for} ~ x \in B_{\rho_j}(x_j)
$$
by Lemma \ref{sim}, and
$$
\aligned
I_2(x) & := \sum\limits_{i=1, i \neq j}^\infty \int_{B_{\rho_i}(x_i)} \frac{M_i}{|x-y|^{n-2m+1}} dy \leq C \sum\limits_{i=1, i \neq j}^\infty \frac{M_i \rho_i^n}{ \textmd{dist}( B_{\rho_i}(x_i), B_{\rho_j}(x_j) )^{ n-2m+1 } } \\
& \leq C \sum\limits_{i=1, i \neq j}^\infty \frac{\rho_i^{n-2m}}{ 2^i (\rho_i +\rho_j)^{ n-2m+1 } } \leq \frac{C}{\rho_j} \sim C 2^\frac{j}{2m} M_j^\frac{1}{2m} \leq C M_j^\frac{1+\alpha}{2m} ~~~~~~ \textmd{for} ~ x \in B_{\rho_j}(x_j)
\endaligned
$$
by Lemma \ref{sim}, \eqref{distBiBj} and \eqref{3seq2},  and 
$$
I_3(x) := \int_{ B_4 - \bigcup\limits_{i=1}^\infty B_{\rho_i}(x_i) } \frac{1}{|x-y|^{n-2m+1}} dy \leq C  ~~~~~~ \textmd{for} ~ x \in B_{\rho_j}(x_j).
$$
Thus
\begin{equation}\label{nablau0Mj}
|\nabla u_0| \leq C M_j^\frac{1 + \alpha}{2m}  ~~~~~~ \textmd{in} ~ B_{\rho_j}(x_j), ~ j \geq 1.
\end{equation}
Since $n \geq 2m + 4$, it follows from \eqref{nablau0Mj} that
\begin{equation}\label{estu0Mj}
\frac{ |\nabla u_0| }{ M_j^\frac{(1-\alpha)(n-2m)}{4m} } \leq \frac{ C M_j^\frac{1 + \alpha}{2m} }{ M_j^\frac{1-\alpha}{m} } \leq \frac{C}{ M_j^\frac{1-3\alpha}{2m} }.
\end{equation}

Finally, we estimate $|\nabla u_j|/u_j^2$ and $|\nabla u_j|/u_j^\frac{2n}{n-2m}$ in $S \cap B_{\rho_j}(x_j)$. Let
$$
s_j = \inf \{ s>0 ~ : ~  S \cap B_{\rho_j}(x_j) \subset B_s(x_j)\}
$$
and $\tilde{u}_j(s) = \psi(s,\lambda_j)$. Then $s_j \leq \rho_j$ and $\tilde{u}_j(s) = u_j(x)$ when $|x-x_j| = s$. By \eqref{ujMjj1}  we have
$$
\tilde{u}_j(s) \geq C M_j^\frac{(1-\alpha)(n-2m)}{4m} ~~~~~~ \textmd{for} ~ 0 \leq s \leq s_j, ~ j \geq 1.   
$$
It follows from Lemma \ref{sim} that
$$
\left( \frac{\lambda_j}{\lambda_j^2+s_j^2} \right)^{2m} \geq C M_j^{1-\alpha} \geq C \left( \frac{\epsilon_j^\frac{2m}{n-2m}}{2^j\lambda_j^m} \right)^{1-\alpha}
$$
and hence,   by \eqref{3seq2},
$$
\aligned
s_j & \leq C\left( \frac{2^j}{\epsilon_j^\frac{2m}{n-2m}} \right)^\frac{1-\alpha}{4m} \lambda_j^\frac{3-\alpha}{4} \leq C (\lambda_j^{-\alpha})^\frac{1-\alpha}{4m} \lambda_j^\frac{3-\alpha}{4}\\
& \leq C (\lambda_j^{-\alpha})^\frac{1}{4m} \lambda_j^\frac{3-\alpha}{4} \leq C \lambda_j^\frac{3m-\alpha(m+1)}{4m} ~~~~~~ \textmd{for} ~ j \geq 1.
\endaligned
$$
Since $n \geq 2m+4$, we have for $0 \leq s \leq s_j$ and $j \geq 1$ that 
\begin{equation}\label{esttildeuj1}
\aligned
\frac{-\tilde{u}_j'(s)}{\tilde{u}_j(s)^2} & = \frac{n-2m}{c_{n,m}} \frac{(\lambda_j^2+s^2)^\frac{n-2m-2}{2}s}{\lambda_j^\frac{n-2m}{2}}\\
& \leq \frac{n-2m}{c_{n,m}} \frac{(\lambda_j^2+s_j^2)^\frac{n-2m-2}{2}s_j}{\lambda_j^\frac{n-2m}{2}}\\
& \leq C \frac{\left( \lambda_j^2+\lambda_j^\frac{3m-\alpha(m+1)}{2m} \right)^\frac{n-2m-2}{2}\lambda_j^\frac{3m-\alpha(m+1)}{4m}}{\lambda_j^\frac{n-2m}{2}}\\
& \leq C \lambda_j^\frac{m(n-2m-3)-\alpha(m+1)(n-2m-1)}{4m}
\endaligned
\end{equation}
and
\begin{equation}\label{esttildeuj2}
\aligned
\frac{-\tilde{u}_j'(s)}{\tilde{u}_j(s)^\frac{2n}{n-2m}} & = \frac{n-2m}{c_{n,m}^\frac{n+2m}{n-2m}}\frac{(\lambda_j^2+s^2)^\frac{n+2m-2}{2}s}{\lambda_j^\frac{n+2m}{2}}\\
&\leq \frac{n-2m}{c_{n,m}^\frac{n+2m}{n-2m}}\frac{(\lambda_j^2+s_j^2)^\frac{n+2m-2}{2}s_j}{\lambda_j^\frac{n+2m}{2}}\\
&\leq C \frac{\left(\lambda_j^2+\lambda_j^\frac{3m-\alpha(m+1)}{2m}\right)^\frac{n+2m-2}{2}\lambda_j^\frac{3m-\alpha(m+1)}{4m}}{\lambda_j^\frac{n+2m}{2}}\\
&\leq C \lambda_j^\frac{m(n+2m-3)-\alpha(m+1)(n+2m-1)}{4m}.
\endaligned
\end{equation}
We pick $\alpha = m/(6m+6)$.  Since $n \geq 2m +4$,  by \eqref{balabala}-\eqref{esttildeuj2} we get 
\begin{equation}\label{nablaKMl}
|\nabla K| \leq C \left( \frac{1}{M_j^\frac{m+2}{4m(m+1)}} + \lambda_j^\frac{1}{8} \right) ~~~~~~ \textmd{in} ~ S \cap B_{\rho_j}(x_j), ~ j \geq 1.
\end{equation}
Hence, it follows from Lemma \ref{sim}, \eqref{3seq2}, \eqref{SsubBi} and \eqref{SB2SB} that \eqref{limnablaK} also holds for $x\in S$.  Thus we have  $K \in C^1(\R^n)$.

By sufficiently increasing $k_i$ for each $i \geq 1$, we can force $\kappa$ to satisfy
\begin{equation}\label{kkappaC1}
\| k-\kappa \|_{C^1(\R^n)} < \frac{\varepsilon}{4}
\end{equation}
by \eqref{1kirhoi}, \eqref{defkappa}, \eqref{nablakappa}, \eqref{KxHxu0x} and \eqref{keqKeqk}. We also can force $K$ to satisfy
$$
|\nabla (K-\kappa)| = |\nabla (K-(\kappa-k))| \leq |\nabla K| + \frac{\varepsilon }{4} \leq \frac{\varepsilon}{2} ~~~~~~ \textmd{in} ~ S
$$
by \eqref{SsubBi}, \eqref{SB2SB} and \eqref{nablaKMl}. Thus by \eqref{nkaeqnK}, $|\nabla (K-\kappa)| \leq \varepsilon /2$ in $\R^n$. It follows from \eqref{kKk} and \eqref{kkappaC1} that $K$ satisfies \eqref{KkC1}. The proof of Theorem \ref{thmC1K}  is completed.   
\hfill$\square$

\vskip0.40in

\noindent{X. Du and H. Yang}\\
Department of Mathematics, The Hong Kong University of Science and Technology\\
Clear Water Bay, Kowloon, Hong Kong\\
E-mail addresses:  xduah@connect.ust.hk (X. Du)~~~~~  mahuiyang@ust.hk (H. Yang)


\begin{thebibliography}{10}

\bibitem{BC} A. Bahri,  J.-M. Coron,  The scalar curvature problem on the standard three-dimensional sphere. {\it J. Funct. Anal.,} {\bf  255} (1991) 106-172. 

\bibitem{B85}  T. P. Branson, Differential operators canonically associated to a conformal structure. Math.
Scand., 57 (1985) 293-345.

\bibitem{B95}  ----, Sharp inequalities, the functional determinant, and the complementary series. {\it Trans. Amer. Math. Soc.,}  {\bf  347}  (1995) 3671-3742.  

\bibitem{Caffarelli1989} L. Caffarelli, B. Gidas, J. Spruck, Asymptotic symmetry and local behavior of semilinear elliptic equations with critical Sobolev growth. {\it Comm. Pure Appl. Math.}, {\bf 42} (1989) 271-297.  


\bibitem{CY}  S.-Y. A. Chang,  P. C. Yang, A perturbation result in prescribing scalar curvature on $\mathbb{S}^n$.  {\it Duke Math. J.,}  {\bf  64}    (1991) 27-69.    


\bibitem{Chang1993}  S.-Y.  A.  Chang,  M. J. Gursky,  P. C.  Yang,  The scalar curvature equation on $2$-and $3$-spheres.  {\it Calc.  Var.  Partial Differential Equations}, {\bf 1}  (1993) 205-229.


\bibitem{Chen1997E} C.-C. Chen, C.-S. Lin, Estimates of the conformal scalar curvature equation via the method of moving planes. {\it Comm. Pure Appl. Math.}, {\bf 50} (1997) 971-1017.


\bibitem{Chen1999} ----, On the asymptotic symmetry of singular solutions of the scalar curvature equations. {\it Math.  Ann.}, {\bf 313} (2) (1999) 229-245.

\bibitem{Chen1997A} W. Chen, C.  Li,  A priori estimates for prescribing scalar curvature equations. {\it Ann. of Math.}, (1997) 547-564.

\bibitem{CX12} X. Chen, X. Xu, The scalar curvature flow on $\mathbb{S}^n$ --  perturbation theorem revisited. {\it Invent. Math.,} {\bf 187}  (2012), no. 2, 395-506. 

\bibitem{DH} Z. Djadli,  E. Hebey,  M. Ledoux, Paneitz-type operators and applications. {\it Duke Math. J.,} {\bf  104} (2000), no. 1, 129-169. 

\bibitem{DMA01}  Z. Djadli,   A. Malchiodi,  M. O. Ahmedou,  Prescribing a fourth order conformal invariant on the standard sphere. I. A perturbation result. {\it Commun. Contemp. Math.,} {\bf  4} (2002) 375-408.


\bibitem{DMA02} ----, Prescribing a fourth order conformal invariant on the standard sphere. II. Blow up analysis and applications. {\it Ann. Sc. Norm. Super. Pisa Cl. Sci.,}  {\bf  5} (2002) 387-434.

\bibitem{Fe} V. Felli,  Existence of conformal metrics on $\mathbb{S}^n$  with prescribed fourth-order invariant.  {\it Adv. Differ. Equ.,} {\bf 7}  (2002) 47-76.  

\bibitem{GJMS} C. R. Graham,  R. Jenne,  L. J. Mason,  G. A. J. Sparling,  Conformally invariant powers of the Laplacian I. Existence. {\it J. London Math. Soc.,} {\bf  46}  (1992) 557-565.  

\bibitem{GZ} C. R. Graham, M. Zworski, Scattering matrix in conformal geometry. {\it Invent. Math.,} {\bf 152} (2003) 89-118.  

\bibitem{JLX} T. Jin, Y.Y. Li, J. Xiong,  The Nirenberg problem and its generalizations: a unified approach.  {\it Math. Ann.,} {\bf 369}  (2017)109-151. 

\bibitem{Jin2019}  T.  Jin, J.  Xiong, Asymptotic symmetry and local behavior of solutions of higher order conformally invariant equations with isolated singularities. arXiv:1901.01678v2.

\bibitem{J} A. Juhl,  Explicit formulas for  GJMS-operators  and  $Q$-curvatures.  {\it Geom. Funct. Anal.,} {\bf  23}  (2013), no. 4, 1278-1370.  

\bibitem{Kusano1992} T. Kusano, C. A. Swanson, A general method for quasilinear elliptic problems in $\mathbb{R}^N$.  {\it  J.  Math.  Anal.  Appl.},    {\bf 167} (1992),  no. 2,  414-428. 


\bibitem{Leung2003} M.  C.  Leung,  Blow-up solutions of nonlinear elliptic equations in $\mathbb{R}^n$  with critical exponent.   {\it Math. Ann.},  {\bf 327} (2003), no. 4, 723-744.


\bibitem{Li01} Y.Y. Li,  Prescribing scalar curvature on  $\mathbb{S}^n$  and related problems. I.  {\it J. Differential Equations, } {\bf  120}  (1995) 319-410.

\bibitem{Li02} ----,  Prescribing scalar curvature on $\mathbb{S}^n$  and related problems. II. Existence and compactness.
{\it Comm. Pure Appl. Math.,} {\bf  49}  (1996) 541-597. 

\bibitem{Lin1998} C.-S. Lin,  A classification of solutions of a conformally invariant fourth order equation in $\mathbb{R}^n$.  {\it Comment.  Math.  Helv.}, {\bf 73} (1998) 206-231.

\bibitem{Lin2000} ----, Estimates of the scalar curvature equation via the method of moving planes III. {\it Comm. Pure Appl. Math.}, {\bf 53}(5) (2000) 611-646. 

\bibitem{MP}  R. Mazzeo, F.  Pacard,  Constant scalar curvature metrics with isolated singularities. {\it Duke Math. J.,} {\bf  99}  (1999),  no. 3, 353-418.  

\bibitem{P} S. Paneitz,  A quartic conformally covariant differential operator for arbitrary pseudo-Riemannian manifolds. Preprint, 1983, arXiv:0803.4331.   

\bibitem{R}  F.  Robert, Positive solutions for a fourth order equation invariant under isometries. {\it Proc. Amer. Math. Soc.,} {\bf 131}   (2003),  no. 5, 1423-1431. 

\bibitem{S} R. Schoen, The existence of weak solutions with prescribed singular behavior for a conformally
invariant scalar equation. {\it Comm. Pure Appl. Math.,} {\bf  41} (1988), no. 3, 317-392.

\bibitem{SY}  R.  Schoen,  S.-T. Yau,  Conformally flat manifolds, Kleinian groups and scalar curvature.
{\it Invent. Math., } {\bf 92} (1988) 47-71.

\bibitem{SZ96} R. Schoen,  D. Zhang, Prescribed scalar curvature on the $n$-sphere. {\it Calc. Var. Partial Differential Equations,} {\bf  4}(1996) 1-25. 

\bibitem{Taliaferro1999} S. Taliaferro, On the growth of superharmonic functions near an isolated singularity, I. {\it  J. Differential Equations}, {\bf 158} (1999), no. 1, 28-47.


\bibitem{Taliaferro2005} S. Taliaferro, Existence of large singular solutions of conformal scalar curvature equations in $\mathbb{S}^n$. {\it   J. Funct. Anal.},  {\bf 224} (2005), no. 1, 192-216.


\bibitem{TaliaferroZhang2003} S. Taliaferro,  L. Zhang, Arbitrarily large solutions of the conformal scalar curvature problem at an isolated singularity. {\it  Proc. Amer. Math. Soc.,}   {\bf 131} (2003), no. 9, 2895-2902.


\bibitem{TaliaferroZhang2006} ----, Asymptotic symmetries for conformal scalar curvature equations with singularity. {\it Calc.   Var.  Partial Differential Equations}, {\bf 26} (2006), no. 4, 401-428.

\bibitem{Wei1999} J.  Wei,   X.  Xu, Classification of solutions of higher order conformally invariant equations. {\it Math. Ann.},  {\bf 313} (1999), no. 2, 207-228.

\bibitem{WY} J. Wei,  S.  Yan,  Infinitely many solutions for the prescribed scalar curvature problem on $\mathbb{S}^n$.  {\it J. Funct. Anal.,} {\bf  258}  (2010)  3048-3081.

\bibitem{Y20} H. Yang, Asymptotic behavior of positive solutions to a nonlinear biharmonic equation near isolated singularities.   {\it Calc. Var. Partial Differential Equations,}  {\bf 59}  (2020),    https://doi.org/10.1007/s00526-020-01767-9.    

\bibitem{Z02}  L. Zhang,  Refined asymptotic estimates for conformal scalar curvature equation via moving
sphere method. {\it J. Funct. Anal., } {\bf 192}  (2002), no. 2, 491-516.

\end{thebibliography}
\end{document}